\newcounter{todocounter}
\DeclareDocumentCommand\addreference{g}{\stepcounter{todocounter}\todo[color = blue!30]{\thetodocounter. Add reference\IfNoValueF{#1}{: #1}}\xspace}
\DeclareDocumentCommand\checkthis{g}{\stepcounter{todocounter}\todo[color = red!50]{\thetodocounter. Check this\IfNoValueF{#1}{: #1}}\xspace}
\DeclareDocumentCommand\fixthis{g}{\stepcounter{todocounter}\todo[color = orange!50]{\thetodocounter. Fix this\IfNoValueF{#1}{: #1}}\xspace}
\DeclareDocumentCommand\expand{g}{\stepcounter{todocounter}\todo[color = green!50]{\thetodocounter. Expand\IfNoValueF{#1}{: #1}}\xspace}
\declaretheoremstyle[
  spaceabove = 3pt,
  spacebelow = 3pt,
]{first}
\declaretheoremstyle[
  spaceabove = 3pt,
  spacebelow = 3pt,
]{second}
\declaretheorem[numberwithin=section, style=first]{theorem}
\declaretheorem[sibling=theorem, style=first]{corollary}
\declaretheorem[sibling=theorem, style=first]{lemma}
\declaretheorem[sibling=theorem, style=first]{proposition}
\declaretheorem[sibling=theorem, style=second]{example}
\declaretheorem[sibling=theorem, style=second]{remark}
\declaretheorem[sibling=theorem, style=second]{definition}
\declaretheorem[sibling=theorem, style=second]{construction}
\crefname{construction}{Construction}{Constructions}
\declaretheorem[numberwithin=section, style=first, title=Theorem]{alphatheorem}
\declaretheorem[sibling=alphatheorem, style=first, title=Assumption]{alphaassumption}
\crefname{alphatheorem}{Theorem}{Theorems}
\crefname{alphaconjecture}{Conjecture}{Conjectures}
\crefname{alphacorollary}{Corollary}{Corollaries}
\crefname{alphaproposition}{Proposition}{Propositions}
\crefname{alphaassumption}{Assumption}{Assumptions}
\def\gitfootnote{\gdef\@thefnmark{}\@footnotetext}
\mathchardef\mhyphen="2D
\newcommand\dash{\nobreakdash-\hspace{0pt}}
\DeclareMathOperator\At{At}
\DeclareMathOperator\at{at}
\DeclareMathOperator\chern{c} 
\DeclareMathOperator\Chern{ch} 
\DeclareMathOperator\chow{CH} 
\DeclareMathOperator\degeneracy{D}
\DeclareMathOperator\Ext{Ext}
\DeclareMathOperator\GL{GL}
\DeclareMathOperator\Gr{Gr}
\DeclareMathOperator\HH{H}
\DeclareMathOperator\HS{HS}
\DeclareMathOperator\hh{h}
\DeclareMathOperator\Hom{Hom}
\DeclareMathOperator\coker{coker}
\DeclareMathOperator\Pic{Pic}
\DeclareMathOperator\Spec{Spec}
\DeclareMathOperator\rk{rk}
\DeclareMathOperator\id{id}
\DeclareMathOperator\source{s}
\DeclareMathOperator\target{t}
\DeclareMathOperator\todd{td}
\DeclareMathOperator\dimvect{\underline{\mathrm{dim}}}
\newcommand\can{\ensuremath{\mathrm{can}}}
\newcommand\Gm{\ensuremath{\mathbb{G}_{\mathrm{m}}}}
\newcommand\kodairaspencer{\ensuremath{\mathrm{KS}}}
\newcommand\linear{\ensuremath{\mathrm{lin}}}
\newcommand\pt{\ensuremath{\mathrm{pt}}}
\newcommand\tangent{\ensuremath{\mathrm{T}}}
\newcommand\tautological{\ensuremath{\mathrm{taut}}}
\newcommand\weyl{\ensuremath{\mathrm{W}}}
\newcommand\sheafHom{\ensuremath{\mathcal{H}\mathrm{om}}}
\newcommand\sheafEnd{\ensuremath{\mathcal{E}\mathrm{nd}}}
\newcommand\sheafExt{\ensuremath{\mathcal{E}\mathrm{xt}}}
\DeclareDocumentCommand\modulistack{om}{\IfNoValueTF{#1}{\mathcal{M}{(#2)}}{\mathcal{M}^{#1}(#2)}}
\DeclareDocumentCommand\modulispace{om}{\IfNoValueTF{#1}{\mathrm{M}{(#2)}}{\mathrm{M}^{#1}(#2)}}
\newcommand\kronecker[2]{\smash{\mathrm{M}_{#2}^{#1}}}
\DeclareDocumentCommand\representationvariety{om}{\IfNoValueTF{#1}{\mathrm{R}({#2})}{\mathrm{R}^{#1}{(#2)}}}
\newcommand\group[1]{\mathrm{G}_{#1}}
\newcommand\PG[1]{\mathrm{PG}_{#1}}
\newcommand\semistable[1]{#1\mhyphen\mathrm{sst}}
\newcommand\stable[1]{#1\mhyphen\mathrm{st}}
\title{On Chow rings of quiver moduli}
\author{Pieter Belmans \and Hans Franzen}
\begin{document}
\maketitle


\begin{abstract}
  We describe the point class and Todd class
  in the Chow ring of a quiver moduli space,
  building on a result of Ellingsrud--Str\o mme.
  This, together with the presentation of the Chow ring by the second author,
  makes it possible to compute integrals on quiver moduli.
  To do so
  we construct a canonical morphism of universal representations in great generality,
  and along the way point out its relation to the Kodaira--Spencer morphism.

  We illustrate the results
  by computing some invariants of some ``small'' Kronecker moduli spaces.
  We also prove that the first non-trivial (6-dimensional) Kronecker quiver moduli space
  is isomorphic to
  the zero locus of a general section of~$\mathcal{Q}^\vee(1)$ on~$\Gr(2,8)$.
\end{abstract}

\tableofcontents

\section{Introduction}

In \cite{MR3318266} the second author gave a tautological presentation of
the Chow ring (with rational coefficients) of a quiver moduli space.
This completed the work of King--Walter \cite[Theorem~3]{MR1324213}
which showed that it was generated by Chern classes of tautological bundles,
adapting a result of Ellingsrud--Str\o mme \cite{MR1228610}
(which by their own attribution is folklore).
Two missing ingredients, necessary for computations,
are
\begin{itemize}
  \item an expression for the point class,
  \item an expression for the Todd class.
\end{itemize}
In this article we provide precisely such an expression,
further elaborating the method of Ellingsrud--Str\o mme
and making it explicit for quiver moduli.

\paragraph{Setup}
Let~$k$ be an algebraically closed field.
The following assumptions will be used repeatedly.
\begin{alphaassumption}
  \label{assumption:standing}
  Let~$Q=(Q_0,Q_1)$ be an acyclic quiver,
  $\mathbf{d}\in\mathbb{N}^{Q_0}$ a dimension vector,
  and~$\theta\in\Hom(\mathbb{Z}^{Q_0},\mathbb{Z})$ a stability parameter
  such that~$\mathbf{d}$ is $\theta$-coprime.
\end{alphaassumption}
Let~$X=\modulispace[\theta]{Q,\mathbf{d}}$ denote
the moduli space of~$\theta$-(semi)stable representations of~$Q$ with dimension vector~$\mathbf{d}$,
and~$\mathcal{U}=\bigoplus_{i\in Q_0}\mathcal{U}_i$ on~$X$ the universal representation.

The first theorem expresses the point class in terms of the Chern classes of the universal representation.
\begin{alphatheorem}
  \label{theorem:point-class}
  With the setup and notation from above the following hold.
  \begin{enumerate}
    \item
      \label{enumerate:point-class-1}
      We can write a closed point~$x_0\in X$ as the degeneracy locus
      \begin{equation}
        \{x_0\}=\degeneracy_{e-1}(j_1^*\sigma)=\degeneracy_{e-1}(j_2^*\sigma)
      \end{equation}
      where
      \begin{equation}
        \sigma\colon\bigoplus_{i\in Q_0}\mathcal{U}_i^\vee\boxtimes\mathcal{U}_i\to\bigoplus_{a\in Q_1}\mathcal{U}_{\source(a)}^\vee\boxtimes\mathcal{U}_{\target(a)}
      \end{equation}
      is the tautological morphism defined in \cref{subsection:tautological-morphism},
      $j_1$ resp.~$j_2$ are the inclusions~$X\hookrightarrow X\times X$ given by~$x\mapsto(x,x_0)$ resp.~$x\mapsto(x_0,x)$
      and~$e=\sum_{i\in Q_0}d_i^2=\rk\bigoplus_{i\in Q_0}\mathcal{U}_i^\vee\otimes\mathcal{U}_i$.
    \item
      \label{enumerate:point-class-2}
      Consequently, in~$\chow^{\dim X}(X)$
      the point class~$[\pt_X]$ is equal to
      the projection of the classes
      \begin{equation}
        \label{equation:identity}
        \prod_{a\in Q_1}\chern\left( \mathcal{U}_{\source(a)}^\vee \right)^{d_{\target(a)}}
        /
        \prod_{i\in Q_0}\chern\left( \mathcal{U}_i^\vee \right)^{d_i} \\
        =
        \prod_{a\in Q_1}\chern\left( \mathcal{U}_{\target(a)} \right)^{d_{\source(a)}}
        /
        \prod_{i\in Q_0}\chern\left( \mathcal{U}_i \right)^{d_i}
      \end{equation}
      onto the homogeneous component of degree~$\dim X$.
  \end{enumerate}
\end{alphatheorem}

Its proof,
given in \cref{subsection:points},
is obtained by adapting and building upon \cite[Lemma~2.4]{MR1228610}
(which is for stable sheaves on~$\mathbb{P}^2$)
to an explicit expression in the case of quiver moduli.

The next theorem gives an expression of the Todd class,
necessary for Hirzebruch--Riemann--Roch computations on quiver moduli.

\begin{alphatheorem}
  \label{theorem:todd-class}
  With the setup and notation from above we have
  \begin{equation}
    \label{equation:todd-class}
    \todd_X=\prod_{a\in Q_1}\todd\left( \mathcal{U}_{\source(a)}^\vee\otimes\mathcal{U}_{\target(a)} \right)\prod_{i\in Q_0}\todd\left( \mathcal{U}_i^\vee\otimes\mathcal{U}_i \right)^{-1}.
  \end{equation}
\end{alphatheorem}

Its proof,
given in \cref{subsection:todd-class},
is obtained by using the 4-term tangent bundle sequence \eqref{equation:4-term-tangent-sequence} on~$\modulispace[\theta]{Q,\mathbf{d}}$.
This sequence is itself not new (see, e.g.,
\cite[Remark~4.1]{MR4352662},
or the discussion of a special case in \cref{remark:ellingsrud-stromme})
but we give a uniform and general treatment of its construction.

\paragraph{On the tautological and Kodaira--Spencer morphism}
Both proofs use the tautological morphism,
which can be constructed in complete generality on the product of two different moduli stacks (resp.~fine moduli spaces),
as explained in \cref{subsection:tautological-morphism}.
By considering different restrictions
(to the fiber of a projection, resp.~the diagonal)
we will find the necessary ingredients for the proofs.

An important result of independent interest related to the tautological morphism is
the construction of a Kodaira--Spencer type morphism \eqref{equation:kodaira-spencer}
and the proof that it is an isomorphism, see \cref{proposition:kodaira-spencer}.

\paragraph{On Kronecker quiver moduli}
To illustrate the methods
we will (re)derive some results for Kronecker moduli.
These are quiver moduli associated to the generalised Kronecker quiver,
and their study was initiated by Drezet in \cite{MR0944602,MR0916199,MR0915184},
in relation to moduli spaces of sheaves on~$\mathbb{P}^2$.
They are often isomorphic to a Grassmannian,
and there exist various duality and periodicity isomorphism amongst them,
see \cref{subsection:census}.
In \cref{proposition:kronecker-invariants} (and more precisely \cref{table:kronecker-census})
we use the results from \cref{theorem:point-class,theorem:todd-class}
to describe invariants of the four smallest Kronecker moduli
which are not Grassmannians.

For the smallest case,
i.e.,
the 6-dimensional moduli space~$\modulispace[\theta_\can]{Q}{(2,3)}$
where~$Q$ is the~3\dash Kronecker quiver,
the numerical invariants agree with
those of the 6-dimensional version of the variety~(b11)
studied by K\"uchle in \cite{MR1326986}.
In \cref{proposition:identification}
we show that they are indeed isomorphic\footnote{
In the first preprint version of this article
this was phrased as a conjecture,
which we can now prove.
}.

\paragraph{Acknowledgements}
We would like to thank Enrico Fatighenti and Fabio Tanturri
for finding the variety in \cref{construction:zero-locus}
when prompted with the question whether there is a~6\dash dimensional zero locus of an equivariant vector bundle on a Grassmannian
with the given Betti numbers.
We want to thank Victoria Hoskins for comments on a preliminary version of the paper.
After the first version of this paper was posted on the arXiv,
discussions with Laurent Manivel, Fabian Reede, and Jieao Song
led to the proof of the identification in \cref{proposition:identification}.

The first author was partially supported by the Luxembourg National Research Fund (FNR--17113194).
The second author was partially supported by the Deutsche Forschungsgemeinschaft (DFG, German Research Foundation) -- SFB-TRR 358/1 2023 -- 491392403.

\section{Quiver moduli and their Chow ring}
We first establish the notation and some basic properties for quivers and their moduli of representations.
For more information the reader is referred to \cite{MR2484736},
and \cite{2210.00033v1} for the stacky point-of-view.

Let $Q=(Q_0,Q_1)$ be a quiver.
For an arrow~$a \in Q_1$,
we denote with $\source(a)$ its source and with $\target(a)$ its target.
We denote the path algebra by $kQ$ and we freely identify representations of $Q$ over $k$ with left $kQ$-modules.
We will denote the Euler form of $Q$ by~$\langle-,-\rangle$.

For a fixed dimension vector $\mathbf{d}\in\mathbb{N}^{Q_0}$,
we fix $k$-vector spaces~$U_i$ of dimension~$d_i$
and consider the~$k$-vector space
\begin{equation}
  \representationvariety{Q,\mathbf{d}} = \bigoplus_{a \in Q_1} \Hom(U_{\source(a)},U_{\target(a)}).
\end{equation}
We regard it as an affine space.
Its $k$-valued points are representations of~$Q$ over~$k$ on the vector spaces~$U_i$.
Moreover, we define
\begin{equation}
  \group{\mathbf{d}} = \prod_{i \in Q_0} \GL(U_i).
\end{equation}
We define a left action of~$\group{\mathbf{d}}$ on~$\representationvariety{Q,\mathbf{d}}$
by letting $g = (g_i)_{i \in Q_0}$ act on $M = (M_a)_{a \in Q_1}$ by
\begin{equation}
  g \cdot M = (g_{\target(a)}M_ag_{\source(a)}^{-1}).
\end{equation}
Two points of $\representationvariety{Q,\mathbf{d}}$ are isomorphic as representations
if and only if they lie in the same~$\group{\mathbf{d}}$-orbit.
Note that the image of the embedding~$\Gm\hookrightarrow \group{\mathbf{d}}$,
which on~$k$-valued points is given by $t \mapsto t \cdot \id$,
acts trivially on $\representationvariety{Q,\mathbf{d}}$.
Therefore the action descends to an action of the cokernel~$\PG{\mathbf{d}}$ of~$\Gm\hookrightarrow\group{\mathbf{d}}$.

\begin{definition}
  The quotient stack $\modulistack{Q,\mathbf{d}} = [\representationvariety{Q,\mathbf{d}}/\group{\mathbf{d}}]$
  is called the \emph{moduli stack} of representations of dimension vector $\mathbf{d}$.
\end{definition}

We introduce a notion of stability.
A \emph{stability parameter} is some $\theta \in \Hom(\mathbb{Z}^{Q_0},\mathbb{Z})$
such that $\theta(\mathbf{d})= 0$.
We will use the basis~$e_i^*$ for~$\Hom(\mathbb{Z}^{Q_0},\mathbb{Z})$.

\begin{definition}
  A representation $M$ of dimension vector $\mathbf{d}$
  is called $\theta$-\emph{semistable} if $\theta(\dimvect M') \leq 0$
  for all proper non-zero subrepresentations $M'$ of $M$.
  We say $M$ is $\theta$-\emph{stable} if the inequality is strict for every such $M'$.
\end{definition}

Using the Hilbert--Mumford criterion,
King showed in \cite{MR1315461} that this notion of (semi\nobreakdash-)stability
agrees with (semi\nobreakdash-)stability with respect to the $\group{\mathbf{d}}$-linearized line bundle $L(\theta)$,
which, as a line bundle, is trivial,
and whose $\group{\mathbf{d}}$-action on the fiber
is given by the character~$\chi_\theta(g) = \prod_{i \in Q_0} \det(g_i)^{-\theta_i}$.

Let
\begin{equation}
  \label{equation:double-inclusion}
  \representationvariety[\semistable{\theta}]{Q,\mathbf{d}}
  \subseteq \representationvariety[\stable{\theta}]{Q,\mathbf{d}}
  \subseteq \representationvariety{Q,\mathbf{d}}
\end{equation}
be the loci of $\theta$-semistable and $\theta$-stable representations respectively.
They are Zariski open, but possibly empty.

\begin{definition}
  The $\theta$-semistable and $\theta$-stable \emph{moduli stacks} of representations of $Q$ of dimension vector $\mathbf{d}$ are defined as
  \begin{equation}
    \begin{aligned}
      \modulistack[\semistable{\theta}]{Q,\mathbf{d}}
      &\colonequals[\representationvariety[\semistable{\theta}]{Q,\mathbf{d}}/\group{\mathbf{d}}] \\
      \modulistack[\stable{\theta}]{Q,\mathbf{d}}
      &\colonequals[\representationvariety[\stable{\theta}]{Q,\mathbf{d}}/\group{\mathbf{d}}].
    \end{aligned}
  \end{equation}
  We define the $\theta$-semistable and $\theta$-stable \emph{moduli spaces} as
  \begin{equation}
    \begin{aligned}
      \modulispace[\semistable{\theta}]{Q,\mathbf{d}}
      &\colonequals\representationvariety{Q,\mathbf{d}}/\!\!/_{L(\theta)}\PG{\mathbf{d}} \\
      \modulispace[\stable{\theta}]{Q,\mathbf{d}}
      &\colonequals\representationvariety{Q,\mathbf{d}}/_{L(\theta)}\PG{\mathbf{d}}.
    \end{aligned}
  \end{equation}
\end{definition}

\begin{proposition}
  \label{proposition:basic-properties}
  We have a diagram
  \begin{equation}
    \label{equation:big-diagram}
    \begin{tikzcd}
      \representationvariety[\stable{\theta}]{Q,\mathbf{d}} \arrow[d, two heads] \arrow[r, hook]
      & \representationvariety[\semistable{\theta}]{Q,\mathbf{d}} \arrow[d, two heads] \arrow[r, hook]
      & \representationvariety{Q,\mathbf{d}} \arrow[d, two heads] \\
      \modulistack[\stable{\theta}]{Q,\mathbf{d}} \arrow[r, hook] \arrow[d]
      & \modulistack[\semistable{\theta}]{Q,\mathbf{d}} \arrow[r, hook] \arrow[d]
      & \modulistack{Q,\mathbf{d}} \arrow[d] \\
      \modulispace[\stable{\theta}]{Q,\mathbf{d}} \arrow[r, hook]
      & \modulispace[\semistable{\theta}]{Q,\mathbf{d}} \arrow[r]
      & \modulispace{Q,\mathbf{d}}
    \end{tikzcd}
  \end{equation}
  where all but one of the horizontal morphisms are open immersions,
  the top vertical morphisms are quotient stack presentations,
  and the bottom vertical morphisms are good (resp.~adequate in positive characteristic) moduli spaces
  in the sense of Alper \cite{MR3237451,MR3272912}.

  Moreover,
  \begin{enumerate}
    \item\label{enumerate:basic-dimension}
      If~$\modulispace[\stable{\theta}]{Q,\mathbf{d}}$ is non-empty,
      then the dimension of~$\modulispace[\semistable{\theta}]{Q,\mathbf{d}}$
      and~$\modulispace[\stable{\theta}]{Q,\mathbf{d}}$ is~$1-\langle\mathbf{d},\mathbf{d}\rangle$.
    \item\label{enumerate:basic-projective}
      The morphism~$\modulispace[\semistable{\theta}]{Q,\mathbf{d}} \to \modulispace{Q,\mathbf{d}}$
      is projective.
      Thus, if~$Q$ is acyclic,
      then~$\modulispace[\semistable{\theta}]{Q,\mathbf{d}}$ is projective.
    \item\label{enumerate:basic-smooth}
      The variety~$\modulispace[\stable{\theta}]{Q,\mathbf{d}}$ is smooth.
  \end{enumerate}
\end{proposition}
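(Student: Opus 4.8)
The plan is to prove the three numbered statements by reducing everything to standard facts about GIT quotients and the explicit affine-space/reductive-group presentation $\representationvariety{Q,\mathbf{d}}/\!\!/\PG{\mathbf{d}}$ recorded above. For the diagram \eqref{equation:big-diagram}: the top row consists of open immersions by the very definition of the semistable and stable loci in \eqref{equation:double-inclusion}, and the top vertical maps are quotient-stack presentations by construction. The bottom vertical maps are good (resp.\ adequate) moduli spaces because King's identification of $\theta$-(semi)stability with GIT (semi)stability for the linearization $L(\theta)$ lets us invoke Alper's general results \cite{MR3237451,MR3272912}: the GIT quotient of an affine scheme by a reductive group is a good moduli space of the corresponding quotient stack in characteristic zero, and an adequate moduli space in positive characteristic. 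The horizontal map $\modulispace[\semistable{\theta}]{Q,\mathbf{d}}\to\modulispace{Q,\mathbf{d}}$ is the projective morphism coming from GIT (it is $\operatorname{Proj}$ of the graded ring of $\chi_\theta$-semi-invariants over the ring of invariants), so it is the only horizontal map that need not be an open immersion; the map $\modulispace[\stable{\theta}]{Q,\mathbf{d}}\hookrightarrow\modulispace[\semistable{\theta}]{Q,\mathbf{d}}$ is open since the stable locus upstairs is saturated open.

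For \ref{enumerate:basic-dimension}: when $\mathbf{d}$ is $\theta$-coprime (our standing \cref{assumption:standing}, though here stated under the weaker hypothesis that $\modulispace[\stable{\theta}]{Q,\mathbf{d}}$ is non-empty) semistability equals stability, the $\PG{\mathbf{d}}$-action on $\representationvariety[\stable{\theta}]{Q,\mathbf{d}}$ is free (scalars are the entire stabilizer of any representation, and we have quotiented them out), and the quotient map is a principal $\PG{\mathbf{d}}$-bundle. Hence
\begin{equation}
  \dim\modulispace[\stable{\theta}]{Q,\mathbf{d}}
  = \dim\representationvariety{Q,\mathbf{d}} - \dim\PG{\mathbf{d}}
  = \sum_{a\in Q_1} d_{\source(a)}d_{\target(a)} - \Bigl(\sum_{i\in Q_0} d_i^2 - 1\Bigr)
  = 1 - \langle\mathbf{d},\mathbf{d}\rangle,
\end{equation}
using the formula $\langle\mathbf{d},\mathbf{d}\rangle = \sum_{i} d_i^2 - \sum_{a} d_{\source(a)}d_{\target(a)}$ for the Euler form. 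Since the stable locus is dense open in the semistable one whenever it is non-empty (both are irreducible, being open in the affine space $\representationvariety{Q,\mathbf{d}}$), the two moduli spaces have the same dimension.

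For \ref{enumerate:basic-projective}: this is immediate from the description of $\modulispace[\semistable{\theta}]{Q,\mathbf{d}}\to\modulispace{Q,\mathbf{d}}$ as $\operatorname{Proj}$ of a finitely generated graded algebra over the affine base $\modulispace{Q,\mathbf{d}} = \operatorname{Spec} k[\representationvariety{Q,\mathbf{d}}]^{\group{\mathbf{d}}}$, which is proper, and being a GIT quotient it is also projective over the base; when $Q$ is acyclic the only invariants are constants (any invariant function must be constant along the orbits of the scaling actions forced by acyclicity, so $\modulispace{Q,\mathbf{d}}$ is a point), and projective over a point means projective. For \ref{enumerate:basic-smooth}: smoothness of $\modulispace[\stable{\theta}]{Q,\mathbf{d}}$ follows because $\representationvariety{Q,\mathbf{d}}$ is smooth (it is an affine space), the stable locus is open hence smooth, and a geometric quotient by a free action of a smooth algebraic group descends smoothness — concretely, the principal bundle $\representationvariety[\stable{\theta}]{Q,\mathbf{d}}\to\modulispace[\stable{\theta}]{Q,\mathbf{d}}$ is smooth and surjective, so the base is smooth. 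The main point requiring care is the freeness of the $\PG{\mathbf{d}}$-action on the stable locus — i.e., that a $\theta$-stable representation has automorphism group exactly the scalars $\Gm$ — which is where $\theta$-stability (via Schur's lemma for the abelian category of representations) does the essential work; everything else is bookkeeping with standard GIT.
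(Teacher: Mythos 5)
The paper does not prove this proposition at all: it is stated as a standard fact, with the reader referred to the survey literature (Reineke's lectures and the stacky treatment cited in Section~2), so there is no in-paper argument to compare against. Your write-up is a correct assembly of exactly those standard facts — King's GIT identification plus Alper for the moduli-space columns, the $\operatorname{Proj}$-over-$\operatorname{Spec}$ description for projectivity (with the acyclicity argument that the invariant ring is $k$), Schur's lemma giving trivial $\PG{\mathbf{d}}$-stabilizers on the stable locus and hence the dimension count and descent of smoothness — and you correctly flag that part~(1) is stated under the weaker hypothesis of non-emptiness of the stable locus rather than $\theta$-coprimality, which your fiber-dimension argument handles.
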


We say that the dimension vector~$\mathbf{d}$ is \emph{$\theta$-coprime}
if~$\theta(\mathbf{d}') \neq 0$ for all~$0 \lneqq \mathbf{d}' \lneqq \mathbf{d}$.
Note that if $\mathbf{d}$ is $\theta$-coprime,
then $\mathbf{d}$ is necessarily indivisible.
If~$\mathbf{d}$ is assumed to be indivisible
then for~$\theta$ not on a wall
holds that~$\mathbf{d}$ is~$\theta$-coprime, see \cite[\S3.5]{MR2484736}.

The following corollary, together with \cref{corollary:descent}, explains our standing assumptions.
\begin{corollary}
  \label{corollary:smooth-projective-quiver-moduli}
  Assume \cref{assumption:standing}.
  Then every $\theta$-semistable representation is $\theta$-stable,
  and thus~$\modulispace[\stable{\theta}]{Q,\mathbf{d}}$ is a smooth projective variety.
\end{corollary}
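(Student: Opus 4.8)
The plan is to deduce the statement from \cref{proposition:basic-properties} once we know that, under \cref{assumption:standing}, the notions of $\theta$-semistability and $\theta$-stability coincide for representations of dimension vector~$\mathbf{d}$. I would prove this coincidence by contradiction. Suppose $M$ is a $\theta$-semistable representation of dimension vector~$\mathbf{d}$ that fails to be $\theta$-stable. Then there is a proper nonzero subrepresentation $M'$ of $M$ for which the strict inequality in the definition of stability fails, i.e.\ $\theta(\dimvect M')\geq 0$; combined with semistability, which gives $\theta(\dimvect M')\leq 0$, this forces $\theta(\dimvect M')=0$. Write $\mathbf{d}'=\dimvect M'$. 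Since each $M'_i$ is a subspace of $M_i$ we have $\mathbf{d}'\leq\mathbf{d}$ componentwise; moreover $\mathbf{d}'\neq\mathbf{d}$, because equality of dimension vectors for a subrepresentation forces $M'_i=M_i$ for all $i$ and hence $M'=M$, contradicting properness; and $\mathbf{d}'\neq 0$ since $M'\neq 0$. Thus $0\lneqq\mathbf{d}'\lneqq\mathbf{d}$ with $\theta(\mathbf{d}')=0$, contradicting the $\theta$-coprimality of~$\mathbf{d}$. Hence every $\theta$-semistable representation is $\theta$-stable.

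With this in hand, the geometric conclusion is immediate. The coincidence of the two stability notions means $\representationvariety[\stable{\theta}]{Q,\mathbf{d}}=\representationvariety[\semistable{\theta}]{Q,\mathbf{d}}$, so the rightmost horizontal morphism in the bottom row of \eqref{equation:big-diagram} is an isomorphism and $\modulispace[\stable{\theta}]{Q,\mathbf{d}}=\modulispace[\semistable{\theta}]{Q,\mathbf{d}}$. By \cref{proposition:basic-properties}\eqref{enumerate:basic-smooth} this variety is smooth, and, since $Q$ is acyclic, by \cref{proposition:basic-properties}\eqref{enumerate:basic-projective} it is projective. Therefore $\modulispace[\stable{\theta}]{Q,\mathbf{d}}$ is a smooth projective variety.

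There is no serious obstacle here: the statement is a formal consequence of the definitions together with \cref{proposition:basic-properties}. The only point that deserves a moment's attention is the elementary observation that a proper subrepresentation has dimension vector strictly below~$\mathbf{d}$ (so that $\theta$-coprimality can be applied), which follows at once by working componentwise with the subspaces $M'_i\subseteq M_i$.
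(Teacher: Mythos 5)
Your proof is correct and follows exactly the route the paper intends: the paper leaves this corollary without an explicit proof, treating it as an immediate consequence of the definition of $\theta$-coprimality (a destabilizing $M'$ would give $0\lneqq\dimvect M'\lneqq\mathbf{d}$ with $\theta(\dimvect M')=0$) together with \cref{proposition:basic-properties}\eqref{enumerate:basic-projective} and \eqref{enumerate:basic-smooth}. Your write-up simply supplies the details the authors omitted, and does so accurately.
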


\subsection{Universal representations on moduli stacks and moduli spaces}
\label{subsection:universal-bundles}
On the representation variety~$\representationvariety{Q,\mathbf{d}}$
we consider the trivial vector bundle with fiber $U_i$
and equip it with the action of $\group{\mathbf{d}}$
by letting its $i$th factor $\GL(U_i)$ act by matrix-vector multiplication.
Denote this $\group{\mathbf{d}}$-equivariant vector bundle also $U_i$.
By abuse of notation, we denote its restriction to the $\theta$-(semi\nobreakdash-)stable locus
in the top row of \eqref{equation:big-diagram}
with the same symbol.
The induced vector bundles on the respective quotient stacks in the middle row of \eqref{equation:big-diagram}
will be denoted~$\mathcal{U}_i$.

For every~$a\in Q_1$
we consider the homomorphism~$U_a\colon U_{\source(a)} \to U_{\target(a)}$ of sheaves,
which, on the fiber over a point corresponding to the representation~$V$
is defined by sending~$v \in V_{\source(a)}$ to $V_a(v) \in V_{\target(a)}$.
This homomorphism is~$\group{\mathbf{d}}$-equivariant
and thus descends to a homomorphism $\mathcal{U}_a\colon\mathcal{U}_{\source(a)}\to\mathcal{U}_{\target(a)}$.
\begin{definition}
  The collection~$\mathcal{U} = ((\mathcal{U}_i)_{i \in Q_0},(\mathcal{U}_a)_{a \in Q_1})$
  on any of the stacks in the middle row of \eqref{equation:big-diagram}
  is called the \emph{universal representation}.
\end{definition}

Now assume that $\mathbf{d}$ is indivisible.
Then it is possible to construct a universal representation also on the stable moduli space,
as was done by King \cite[Proposition~5.3]{MR1315461}.
To this end, fix some $\mathbf{a} \in \mathbb{Z}^{Q_0}$ such that $\theta(\mathbf{a})=1$.
On the $\group{\mathbf{d}}$-equivariant vector bundle $U_i(\mathbf{a}) = U_i \otimes L(\mathbf{a})$
living on~$\representationvariety[\stable{\theta}]{Q,\mathbf{d}}$
the kernel of $\group{\mathbf{d}} \to \PG{\mathbf{d}}$ acts trivially.
It is therefore a $\PG{\mathbf{d}}$-equivariant vector bundle
and thus descends to the geometric $\PG{\mathbf{d}}$-quotient $\modulispace[\stable{\theta}]{Q,\mathbf{d}}$.

Call the descended bundle $\mathcal{U}_i(\mathbf{a})$.
We moreover obtain homomorphisms $\mathcal{U}(\mathbf{a})_a\colon\mathcal{U}(\mathbf{a})_{\source(a)} \to \mathcal{U}(\mathbf{a})_{\target(a)}$
in the same way as for the universal representation on the moduli stack.



\begin{corollary}
  \label{corollary:descent}
  Assume \cref{assumption:standing}.
  Let~$\mathbf{a}$ be an element of~$\mathbb{Z}^{Q_0}$
  such that~$\theta(\mathbf{a})=0$.
  On $\modulispace[\stable{\theta}]{Q,\mathbf{d}}$ there exists
  the universal representation~$\mathcal{U}(\mathbf{a}) = ((\mathcal{U}(\mathbf{a})_i)_{i \in Q_0},(\mathcal{U}(\mathbf{a})_a)_{a \in Q_1})$.
\end{corollary}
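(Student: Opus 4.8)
The plan is to read the statement off King's construction of the universal bundle recalled just above \cite[Proposition~5.3]{MR1315461}: under \cref{assumption:standing} that construction applies, so there is essentially nothing new to prove, only a few points worth making explicit.

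First I would record the geometric input. By \cref{corollary:smooth-projective-quiver-moduli} every $\theta$-semistable representation is already $\theta$-stable and $\mathbf{d}$ is indivisible (being $\theta$-coprime), so in \eqref{equation:big-diagram} the variety $X=\modulispace[\stable{\theta}]{Q,\mathbf{d}}$ is the geometric quotient of $\representationvariety[\stable{\theta}]{Q,\mathbf{d}}$ by $\PG{\mathbf{d}}$, and this quotient map is a $\PG{\mathbf{d}}$-torsor: a $\theta$-stable representation has only scalar self-maps, hence trivial $\PG{\mathbf{d}}$-stabiliser, and the work underlying \cref{proposition:basic-properties} gives that the action is scheme-theoretically free with $\representationvariety[\stable{\theta}]{Q,\mathbf{d}}\to X$ a principal bundle. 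Equivariant, faithfully flat descent along this torsor therefore turns $\PG{\mathbf{d}}$-equivariant vector bundles on $\representationvariety[\stable{\theta}]{Q,\mathbf{d}}$, and morphisms between them, into vector bundles and morphisms on $X$.

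Next I would apply this to the tautological data $\bigl((U_i)_{i\in Q_0},(U_a)_{a\in Q_1}\bigr)$ on $\representationvariety[\stable{\theta}]{Q,\mathbf{d}}$. Each $U_i$ is only $\group{\mathbf{d}}$-equivariant, because the central $\Gm=\ker(\group{\mathbf{d}}\to\PG{\mathbf{d}})$ acts on its fibres by scaling; this is cancelled by tensoring with the trivial line bundle $L(\mathbf{a})$ equipped with the $\group{\mathbf{d}}$-linearisation making $U_i\otimes L(\mathbf{a})$ into a $\PG{\mathbf{d}}$-equivariant bundle — indivisibility of $\mathbf{d}$ is exactly what makes such a linearisation available. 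The structure maps $U_a\colon U_{\source(a)}\to U_{\target(a)}$ are already $\Gm$-equivariant for the scaling actions, since the scalars on source and target cancel, so $U_a\otimes\id_{L(\mathbf{a})}$ is $\PG{\mathbf{d}}$-equivariant too. Descending everything yields bundles $\mathcal{U}(\mathbf{a})_i$ and morphisms $\mathcal{U}(\mathbf{a})_a$ on $X$; as descent respects composition of morphisms and restriction to a closed point, $\mathcal{U}(\mathbf{a})=\bigl((\mathcal{U}(\mathbf{a})_i),(\mathcal{U}(\mathbf{a})_a)\bigr)$ is a representation of $Q$ by vector bundles on $X$ whose fibre over a closed point $[V]$ is isomorphic to $V$ — that is, a universal representation. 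The role of the parameter $\mathbf{a}$ is just the familiar twist ambiguity (any two universal representations differ by tensoring with a single line bundle on $X$), so once one such representation is produced the statement holds for the whole family of choices.

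There is no real obstacle here. The one point genuinely needing care is the assertion that $\representationvariety[\stable{\theta}]{Q,\mathbf{d}}\to X$ is an honest principal $\PG{\mathbf{d}}$-bundle and not merely a geometric quotient, so that equivariant descent is available; over a field of positive characteristic this rests on the separatedness and adequate-moduli-space input gathered in \cref{proposition:basic-properties}, all of which is already in the literature.
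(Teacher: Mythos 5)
Your argument is correct and coincides with the paper's own (implicit) proof: the corollary is exactly King's descent construction recalled in the paragraph immediately preceding it, namely twisting each $U_i$ by $L(\mathbf{a})$ so that the central $\Gm=\ker(\group{\mathbf{d}}\to\PG{\mathbf{d}})$ acts trivially and then descending the bundles and the structure maps $U_a$ along the free geometric quotient $\representationvariety[\stable{\theta}]{Q,\mathbf{d}}\to\modulispace[\stable{\theta}]{Q,\mathbf{d}}$. The only point worth flagging is that the normalisation making the central torus act trivially is $\sum_{i\in Q_0}a_id_i=1$ (available precisely because $\mathbf{d}$ is indivisible), the condition the paper elsewhere writes as $\theta(\mathbf{a})=1$; the condition ``$\theta(\mathbf{a})=0$'' in the statement appears to be a misprint, and your proof correctly uses the former.
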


\begin{remark}
  Without the indivisibility condition
  it is shown in \cite[Theorem~4.4]{MR3683503}
  that there cannot be a universal object
  as there is a Brauer obstruction
  (assuming the natural conjecture in Conjecture~4.3 of op.~cit.).
\end{remark}

As $\mathbf{a}$ will be fixed,
we often just write $\mathcal{U}=((\mathcal{U}_i)_{i\in Q_0},(\mathcal{U}_a)_{a\in Q_1})$.
It will be clear from the context whether we are considering~$\mathcal{U}$ on the moduli space or on a moduli stack.

Just like for representations of $Q$ over a field,
the category of representations of $Q$ in the category of quasicoherent $\mathcal{O}_S$-modules
for a $k$-scheme (or stack) $S$
is equivalent to the category of quasicoherent left modules over the sheaf of noncommutative algebras $\mathcal{O}_SQ$.
We will not make a distinction between the two.

\begin{remark}
  \label{remark:functor-of-points}
  Using the universal representation
  we are able to describe the functor of points of $\modulispace[\stable{\theta}]{Q,\mathbf{d}}$ concretely.

  Let $S$ be a scheme over $k$.
  We define an equivalence relation $\sim$ on left $\mathcal{O}_SQ$-modules
  by setting $\mathcal{M} \sim \mathcal{N}$ if there exists a line bundle $\mathcal{L}$
  such that $\mathcal{M} \otimes_{\mathcal{O}_S} \mathcal{L} \cong \mathcal{N}$ as left $\mathcal{O}_SQ$-modules.
  The set of $S$-valued points of $\modulispace[\stable{\theta}]{Q,\mathbf{d}}$ is
  \begin{equation}
    \begin{aligned}
      \big\{ \mathcal{M}
        &\mid
        \text{$\mathcal{M}$ left $\mathcal{O}_SQ$-module, }
        \text{$\forall i\in Q_0$: $\mathcal{M}_i$ locally free of rank $d_i$}\\
        &\qquad\text{$\mathcal{M}(p) \otimes_{\kappa(p)} K$ is $\theta$-stable for every $p \in S$ and every finite field extension $K/\kappa(p)$}
      \big\} / {\sim}.
    \end{aligned}
  \end{equation}
  In the above, $\kappa(p) = \mathcal{O}_{S,p}/\mathfrak{m}_p$ denotes
  the residue field of the (not necessarily closed) point $p \in S$
  and $\mathcal{M}(p) = \mathcal{M} \otimes_{\mathcal{O}_S} \kappa(p)$ is the fiber of $\mathcal{M}$;
  it is a representation of $Q$ over $\kappa(p)$.
  If $\mathcal{M}$ is a left $\mathcal{O}_SQ$-module
  such that all $\mathcal{M}_i$ are locally free of rank $d_i$
  and all finite extensions of all fibers are $\theta$-stable,
  then there exists a unique morphism $f\colon S \to \modulispace[\stable{\theta}]{Q,\mathbf{d}}$
  and a line bundle $\mathcal{L}$ on $S$ such that $\mathcal{M} \otimes_{\mathcal{O}_S} \mathcal{L} \cong f^*\mathcal{U}$.
\end{remark}

\subsection{Tautological presentation for the Chow ring}
In this section we recall the tautological presentation of
the Chow ring of a quiver moduli space.
We continue to work with \cref{assumption:standing}.
Let
\begin{equation}
  X = \modulispace[\stable\theta]{Q,\mathbf{d}};
\end{equation}
it is smooth and projective by \cref{proposition:basic-properties}

We fix~$\mathbf{a}$ such that~$\theta(\mathbf{a}) = 1$,
and we let~$\mathcal{U}$ be the universal representation which corresponds to this choice.
The following is \cite[Theorem~3]{MR1324213}:

\begin{theorem}[King--Walter]
  \label{theorem:king-walter}
  The Chow ring~$\chow^\bullet(X)$ is, as a~$\mathbb{Z}$-algebra,
  generated by the Chern classes~$\chern_r(\mathcal{U}_i)$
  of the summands of the universal representation.
\end{theorem}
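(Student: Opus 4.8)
The plan is to compute $\chow^\bullet(X)$ by equivariant intersection theory on the representation space and then descend to the moduli space. Recall from \cref{proposition:basic-properties} that $X=\modulispace[\stable\theta]{Q,\mathbf{d}}$ is the geometric quotient of the open subset $\representationvariety[\stable\theta]{Q,\mathbf{d}}\subseteq\representationvariety{Q,\mathbf{d}}$ by the free $\PG{\mathbf{d}}$-action, and that the moduli stack $\modulistack[\stable\theta]{Q,\mathbf{d}}=[\representationvariety[\stable\theta]{Q,\mathbf{d}}/\group{\mathbf{d}}]$ sits over $X$ as a $\Gm$-gerbe. I would first show that $\chow^\bullet(\modulistack[\stable\theta]{Q,\mathbf{d}})$, which by definition is the $\group{\mathbf{d}}$-equivariant Chow ring of $\representationvariety[\stable\theta]{Q,\mathbf{d}}$, is generated as a $\mathbb{Z}$-algebra by the Chern classes of the universal bundles $\mathcal{U}_i$, and then transport this down to $X$ using the section of the gerbe provided by \cref{corollary:descent}.

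For the first step, consider the open immersion $\representationvariety[\stable\theta]{Q,\mathbf{d}}\hookrightarrow\representationvariety{Q,\mathbf{d}}$, whose complement is a $\group{\mathbf{d}}$-invariant closed subset. The localisation (excision) sequence for equivariant Chow groups yields a ring surjection $\chow^\bullet_{\group{\mathbf{d}}}(\representationvariety{Q,\mathbf{d}})\twoheadrightarrow\chow^\bullet_{\group{\mathbf{d}}}(\representationvariety[\stable\theta]{Q,\mathbf{d}})$ compatible with restriction of the equivariant bundles $U_i$. Since $\representationvariety{Q,\mathbf{d}}$ is a $\group{\mathbf{d}}$-representation, hence an equivariant affine bundle over a point, homotopy invariance of equivariant Chow groups identifies $\chow^\bullet_{\group{\mathbf{d}}}(\representationvariety{Q,\mathbf{d}})$ with $\chow^\bullet_{\group{\mathbf{d}}}(\Spec k)=\chow^\bullet(\mathrm{B}\group{\mathbf{d}})$. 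As $\group{\mathbf{d}}=\prod_{i\in Q_0}\GL(U_i)$ we have $\mathrm{B}\group{\mathbf{d}}=\prod_i\mathrm{B}\GL(U_i)$, and $\chow^\bullet(\mathrm{B}\GL_n)$ is the polynomial ring $\mathbb{Z}[\chern_1,\dots,\chern_n]$ on the Chern classes of the universal bundle (Totaro, Edidin--Graham); hence $\chow^\bullet(\mathrm{B}\group{\mathbf{d}})$ is the polynomial ring on the classes $\chern_r(U_i)$. Since the universal bundle on $\mathrm{B}\GL(U_i)$ pulls back, along $\representationvariety[\stable\theta]{Q,\mathbf{d}}\to\Spec k$ and down to the stack, to $\mathcal{U}_i$, this shows that $\chow^\bullet(\modulistack[\stable\theta]{Q,\mathbf{d}})$ is generated as a $\mathbb{Z}$-algebra by the $\chern_r(\mathcal{U}_i)$.

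It then remains to descend along $\pi\colon\modulistack[\stable\theta]{Q,\mathbf{d}}\to X$. By \cref{corollary:descent}, with the fixed choice of $\mathbf{a}$, the universal representation $\mathcal{U}$ exists on $X$; regarding it as a family of $\theta$-stable representations over $X$ and using the description of the functor of points in \cref{remark:functor-of-points}, it induces a morphism $s\colon X\to\modulistack[\stable\theta]{Q,\mathbf{d}}$ with $\pi\circ s=\id_X$ and $s^*\mathcal{U}_i\cong\mathcal{U}_i$. Hence $s^*\circ\pi^*=\id$ on $\chow^\bullet(X)$, so $s^*$ is surjective, and it sends the generators $\chern_r(\mathcal{U}_i)$ of $\chow^\bullet(\modulistack[\stable\theta]{Q,\mathbf{d}})$ to the classes $\chern_r(\mathcal{U}_i)$ on $X$. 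This would complete the proof.

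The substantive part — and the point most in need of care — is the first step: it rests on the foundational package of equivariant intersection theory (homotopy invariance, the excision sequence, the computation of $\chow^\bullet(\mathrm{B}\GL_n)$, and the identification of the Chow ring of a quotient stack with equivariant Chow), and in positive characteristic one must ensure these hold in the generality of \cref{assumption:standing}, using that $X\to\modulispace{Q,\mathbf{d}}$ is an adequate moduli space. A secondary point is to check cleanly that the family $\mathcal{U}$ on $X$ genuinely defines a \emph{section} of the gerbe $\pi$, i.e.\ that the classifying map satisfies $\pi\circ s=\id_X$ rather than merely some automorphism of $X$; this should be immediate from the universal property once one unwinds the construction of $X$ as a good (adequate) moduli space.
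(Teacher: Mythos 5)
Your argument is correct, but it is not the one behind the statement in the paper: the paper gives no proof at all here, citing \cite[Theorem~3]{MR1324213}, and King--Walter's proof (following Ellingsrud--Str\o mme) is the ``diagonal trick'': one exhibits $\Delta_X\subset X\times X$ as the top degeneracy locus of the tautological morphism $\sigma$ (exactly \cref{proposition:diagonal} of this paper), applies Thom--Porteous to write $[\Delta_X]$ as a polynomial in the Chern classes of $p_1^*\mathcal{U}_i$ and $p_2^*\mathcal{U}_i$, and then pushes forward $p_1^*\alpha\cdot[\Delta_X]$ along $p_2$ to conclude that every class is a polynomial in the $\chern_r(\mathcal{U}_i)$. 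Your route is instead the equivariant one: $\chow^\bullet_{\group{\mathbf{d}}}(\representationvariety{Q,\mathbf{d}})\cong\chow^\bullet(\mathrm{B}\group{\mathbf{d}})$ is a polynomial ring on the $\chern_r(U_i)$ by Totaro/Edidin--Graham, it surjects onto $\chow^\bullet_{\group{\mathbf{d}}}(\representationvariety[\stable\theta]{Q,\mathbf{d}})=\chow^\bullet(\modulistack[\stable\theta]{Q,\mathbf{d}})$ by excision, and one descends along the trivialising section of the $\Gm$-gerbe furnished by \cref{corollary:descent}. This is in essence the first half of the second author's presentation theorem \cite[Theorem~14]{MR3318266} (\cref{theorem:franzen} here), specialised to the generation statement; all the ingredients you invoke (homotopy invariance, localisation, $\chow^\bullet(\mathrm{B}\GL_n)=\mathbb{Z}[\chern_1,\dots,\chern_n]$, identification of the Chow ring of a quotient stack with equivariant Chow) hold integrally and over any field, so the $\mathbb{Z}$-algebra statement does come out, and the adequate-moduli-space caveat you raise is not needed since $X$ is a geometric quotient by a free $\PG{\mathbf{d}}$-action. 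The trade-off between the two approaches: the diagonal argument stays entirely on the moduli space, needs no equivariant foundations, and is precisely what this paper upgrades to the point-class formula of \cref{theorem:point-class}; your equivariant argument costs the Edidin--Graham/Totaro package but immediately suggests where the relations come from (the unstable strata removed by excision), which is the content of \cref{theorem:franzen}. Two small points to tidy: in ``$s^*\mathcal{U}_i\cong\mathcal{U}_i$'' the two symbols denote the universal bundle on the stack and the descended twist $\mathcal{U}(\mathbf{a})_i$ on $X$ respectively (consistent with the paper's stated abuse of notation, but worth saying), and $\pi\circ s=\id_X$ follows because both morphisms agree on closed points of the reduced separated scheme $X$, the fiber of $\mathcal{U}(\mathbf{a})$ at $x$ being the representation classified by $x$ as in \cref{remark:functor-of-points}.
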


As a consequence of this result,
we obtain the following.
\begin{corollary}
  \label{corollary:picard-group-presentation}
  We have that~$\Pic(X)\cong\chow^1(X)$ is generated as an abelian group
  by the first Chern classes~$\chern_1(\mathcal{U}_i)$.
  They are subject to the relation~$\sum_{i \in Q_0} a_i\chern_1(\mathcal{U}_i) = 0$.
  If the codimension of the unstable locus $\representationvariety{Q,\mathbf{d}} \setminus \representationvariety[\stable\theta]{Q,\mathbf{d}}$
  is at least 2,
  then this is the only relation among the~$\chern_1(\mathcal{U}_i)$.
\end{corollary}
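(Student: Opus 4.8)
The plan is to combine the smoothness of $X$ (for the identification $\Pic(X)\cong\chow^1(X)$), the generation result \cref{theorem:king-walter}, an explicit description of the $\mathcal{U}_i$ as descended equivariant bundles (for the relation), and a comparison with the equivariant Picard group of the representation space (for the final assertion). Concretely: since $X=\modulispace[\stable{\theta}]{Q,\mathbf{d}}$ is smooth by \cref{proposition:basic-properties}, taking first Chern classes gives $\Pic(X)\cong\chow^1(X)$; and since \cref{theorem:king-walter} presents $\chow^\bullet(X)$ as a graded $\mathbb{Z}$-algebra generated by the homogeneous classes $\chern_r(\mathcal{U}_i)$ of degree $r\geq 1$, while no product of two or more of them lands in degree $1$, the group $\chow^1(X)$ is generated as an abelian group by the $\chern_1(\mathcal{U}_i)$ with $i\in Q_0$.

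To obtain the relation I would use the construction recalled in \cref{subsection:universal-bundles} and \cref{corollary:descent}: along the $\PG{\mathbf{d}}$-torsor $\pi\colon\representationvariety[\stable{\theta}]{Q,\mathbf{d}}\to X$ one has $\pi^*\mathcal{U}_i\cong U_i\otimes L(\mathbf{a})$, hence $\pi^*\det\mathcal{U}_i\cong\det(U_i)\otimes L(\mathbf{a})^{\otimes d_i}$, and therefore
\begin{equation*}
  \pi^*\Bigl(\bigotimes_{i\in Q_0}(\det\mathcal{U}_i)^{\otimes a_i}\Bigr)\cong\Bigl(\bigotimes_{i\in Q_0}\det(U_i)^{\otimes a_i}\Bigr)\otimes L(\mathbf{a})^{\otimes\sum_{i}a_id_i}
\end{equation*}
as $\group{\mathbf{d}}$-equivariant line bundles on $\representationvariety[\stable{\theta}]{Q,\mathbf{d}}$. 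Pulling this further back to the affine space $\representationvariety{Q,\mathbf{d}}$, it is the trivial line bundle equipped with a character of $\group{\mathbf{d}}$ which, precisely because of the normalisation $\theta(\mathbf{a})=1$ that was used to make $U_i\otimes L(\mathbf{a})$ descend, is the trivial character. Since the universal bundle trivialises the $\Gm$-gerbe $[\representationvariety[\stable{\theta}]{Q,\mathbf{d}}/\group{\mathbf{d}}]\to X$, so that the class of a line bundle on $X$ is detected by its pullback to the corresponding $\group{\mathbf{d}}$-equivariant bundle, this forces $\bigotimes_{i}(\det\mathcal{U}_i)^{\otimes a_i}$ to be trivial on $X$, i.e.\ $\sum_{i\in Q_0}a_i\chern_1(\mathcal{U}_i)=\chern_1\bigl(\bigotimes_{i}(\det\mathcal{U}_i)^{\otimes a_i}\bigr)=0$.

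For the final claim, assume the codimension of $\representationvariety{Q,\mathbf{d}}\setminus\representationvariety[\stable{\theta}]{Q,\mathbf{d}}$ is at least $2$. As $X$ is the base of the $\PG{\mathbf{d}}$-torsor $\pi$ we have $\chow^1(X)\cong\chow^1_{\PG{\mathbf{d}}}(\representationvariety[\stable{\theta}]{Q,\mathbf{d}})$; the equivariant localisation sequence together with the codimension hypothesis identifies this with $\chow^1_{\PG{\mathbf{d}}}(\representationvariety{Q,\mathbf{d}})$, and since $\representationvariety{Q,\mathbf{d}}$ is a representation of the connected group $\PG{\mathbf{d}}$ the latter is the character group $\widehat{\PG{\mathbf{d}}}\cong\{\mathbf{c}\in\mathbb{Z}^{Q_0}\mid\sum_{i}c_id_i=0\}$, which is free of rank $|Q_0|-1$. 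On the other hand $\theta(\mathbf{a})=1$ forces the entries of $\mathbf{a}$ to have no common divisor, so $\mathbb{Z}^{Q_0}/\mathbb{Z}\mathbf{a}$ is free of the same rank. By the first two steps there is a surjection $\mathbb{Z}^{Q_0}/\mathbb{Z}\mathbf{a}\to\chow^1(X)$ sending the $i$-th basis vector to $\chern_1(\mathcal{U}_i)$, and a surjection of free abelian groups of equal finite rank is an isomorphism; hence its kernel $\mathbb{Z}\mathbf{a}$ accounts for all relations among the $\chern_1(\mathcal{U}_i)$.

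The step I expect to be most delicate is the last paragraph, where the identification of $\chow^1(X)$ first with an equivariant Chow group, then (via the codimension hypothesis) with $\chow^1_{\PG{\mathbf{d}}}(\representationvariety{Q,\mathbf{d}})$, and finally with $\widehat{\PG{\mathbf{d}}}$ requires a careful treatment of equivariant Chow groups; a secondary technicality is keeping the character computation in the relation step consistent with the paper's conventions for $L(\mathbf{a})$ and for the central $\Gm\subset\group{\mathbf{d}}$.
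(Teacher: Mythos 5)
Your proof is correct and follows exactly the route the paper leaves implicit (the corollary is stated there without proof, as a consequence of \cref{theorem:king-walter}): generation of $\chow^1(X)$ in degree one from King--Walter, the relation from the character computation for the descended bundles $U_i\otimes L(\mathbf{a})$, and the codimension-two hypothesis via equivariant excision to identify $\chow^1(X)$ with the rank-$(|Q_0|-1)$ character group of $\PG{\mathbf{d}}$. One small point you already half-flag: the condition that makes $U_i\otimes L(\mathbf{a})$ descend, and hence makes the total character in your second paragraph trivial, is $\sum_{i}a_id_i=1$ (triviality of the central $\Gm$-weight) rather than the pairing $\theta(\mathbf{a})=1$ as literally written in the paper, so your computation of the $L(\mathbf{a})^{\otimes\sum_i a_id_i}$ factor is the correct one and the normalisation should be read accordingly.
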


Thus the normalization as discussed in \cref{corollary:descent}
plays an important role in the presentation of~$\Pic(X)$,
and likewise for all of~$\chow^\bullet(X)$.

We now recall the presentation of the Chow ring with rational coefficients from \cite{MR3318266},
which is expressed in the Chern roots of the universal representation.
We consider variables~$\xi_{i,k}$ for all~$i \in Q_0$ and~$k=1,\ldots,d_i$
and define~$x_{i,k} = \mathrm{e}_k(\xi_{i,1},\ldots,\xi_{i,d_i})$,
the $k$th elementary symmetric polynomial.
Let $\weyl_{\mathbf{d}}\colonequals\prod_{i\in Q_0}\mathrm{Sym}_{d_i}$.
Define
\begin{equation}
  \begin{aligned}
    R &\colonequals \bigotimes_{i\in Q_0}\mathbb{Q}[\xi_{i,1},\ldots,\xi_{i,d_i}] \\
    A &\colonequals R^{\weyl_{\mathbf{d}}}
    =\bigotimes_{i\in Q_0}\mathbb{Q}[x_{i,1},\ldots,x_{i,d_i}]
  \end{aligned}
\end{equation}
where~$\weyl_{\mathbf{d}}$ acts by permuting the factors.
We define the \emph{symmetrization map}
\begin{equation}
  \rho\colon R\to A:f\mapsto
  \frac{1}{\delta}\sum_{\sigma\in\weyl_{\mathbf{d}}}\operatorname{sign}(\sigma)\sigma\cdot f
\end{equation}
where~$\delta\colonequals\prod_{i\in Q_0}\prod_{1\leq k<\ell\leq d_i}(\xi_{i,\ell}-\xi_{i,k})$ is the discriminant.
It is a homomorphism of $A$-modules.
As an $A$-module, $R$ is free of rank~$\#\weyl_{\mathbf{d}}$.

We define~$I_\tautological$ as the ideal of~$R$ generated by the polynomials
\begin{equation}
  \label{equation:relation-from-forbidden}
  \prod_{a\in Q_1}\prod_{k=1}^{d_{\source(a)}'}\prod_{\ell=d_{\target(a)}'+1}^{d_{\target(a)}}\left( \xi_{\target(a),\ell}-\xi_{\source(a),k} \right)
\end{equation}
where~$0\lneqq\mathbf{d}'\lneqq\mathbf{d}$ such that~$\theta(\mathbf{d}')>0$,
the so-called \emph{forbidden} dimension vectors.
Then~$\rho(I_\tautological)$ is an ideal of~$A$.
In fact, it is enough to consider only minimal forbidden dimension vectors with respect to
the partial order ${\trianglelefteq}$ introduced in \cite[\S5.1]{MR3318266}:
when~$\mathbf{d}'\trianglelefteq\mathbf{d}''$ are both forbidden,
then the polynomial \eqref{equation:relation-from-forbidden} for~$\mathbf{d}'$
divides that for~$\mathbf{d}''$.

The \emph{linear relation} is the one arising as in \cref{corollary:picard-group-presentation}
and depends on the choice of~$\mathbf{a}$ such that~$\theta(\mathbf{a})=1$,
thus it is not tautological.
We define~$I_\linear$ as the principal ideal of~$A$ generated by
\begin{equation}
  \sum_{i\in Q_0}a_ix_{i,1}.
\end{equation}

The following result is \cite[Theorem~14]{MR3318266}.
\begin{theorem}[Franzen]
  \label{theorem:franzen}
  Assume \cref{assumption:standing}.
  Then the kernel of the surjective $\mathbb{Q}$-algebra homomorphism
  \begin{equation}
    \bigotimes_{i \in Q_0} \mathbb{Q}[x_{i,1},\ldots,x_{i,d_i}] \twoheadrightarrow \chow^\bullet_{\mathbb{Q}}(X):x_{i,k}\mapsto\chern_k(\mathcal{U}_i)
  \end{equation}
  is~$I_\linear + \rho(I_\tautological)$.
\end{theorem}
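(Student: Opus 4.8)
The plan is to realise $X = \modulispace[\stable\theta]{Q,\mathbf{d}}$ as an open substack of the moduli stack $\modulistack{Q,\mathbf{d}} = [\representationvariety{Q,\mathbf{d}}/\group{\mathbf{d}}]$ and to compare equivariant Chow rings. Since $\representationvariety{Q,\mathbf{d}}$ is an affine space, its $\group{\mathbf{d}}$-equivariant Chow ring is just $\chow^\bullet_{\group{\mathbf{d}}}(\pt)_{\mathbb{Q}} = \chow^\bullet_{\group{\mathbf{d}}}(\representationvariety{Q,\mathbf{d}})_{\mathbb{Q}}$, which by the splitting principle for $\group{\mathbf{d}} = \prod_i \GL(U_i)$ is exactly the polynomial ring $A = \bigotimes_{i} \mathbb{Q}[x_{i,1},\dots,x_{i,d_i}]$, the $x_{i,k}$ being the universal Chern classes of the $\mathcal{U}_i$; passing to Chern roots $\xi_{i,k}$ identifies this with $R^{\weyl_{\mathbf{d}}}$. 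The first step is therefore to argue that the restriction map $\chow^\bullet(\modulistack{Q,\mathbf{d}})_{\mathbb{Q}} \to \chow^\bullet(X)_{\mathbb{Q}}$ factors through $\chow^\bullet(\modulistack[\stable\theta]{Q,\mathbf{d}})_{\mathbb{Q}} = \chow^\bullet(X)_{\mathbb{Q}}$ (the equality because $X$ is the geometric quotient of a free $\PG{\mathbf{d}}$-action, using $\theta$-coprimality via \cref{corollary:smooth-projective-quiver-moduli}), and that this restriction is surjective — this last point is precisely \cref{theorem:king-walter} of King--Walter, so one gets surjectivity of $A \twoheadrightarrow \chow^\bullet_{\mathbb{Q}}(X)$ for free. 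The remaining task is to identify the kernel.

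The second step is to show $I_\linear + \rho(I_\tautological)$ is contained in the kernel. For $I_\linear$: on $\modulispace[\stable\theta]{Q,\mathbf{d}}$ the universal representation is normalised using $\mathbf{a}$ with $\theta(\mathbf{a}) = 1$, which forces $\sum_i a_i \chern_1(\mathcal{U}_i) = 0$ in $\chow^1(X)$ (this is \cref{corollary:picard-group-presentation}), so the generator $\sum_i a_i x_{i,1}$ of $I_\linear$ maps to zero. For $\rho(I_\tautological)$: each generator \eqref{equation:relation-from-forbidden}, for a forbidden $\mathbf{d}'$ with $\theta(\mathbf{d}') > 0$, is (up to sign) the equivariant Euler class of the vector bundle on a partial flag bundle whose vanishing cuts out the locus of representations admitting a subrepresentation of dimension vector $\mathbf{d}'$; since no $\theta$-stable representation has such a subrepresentation, this Euler class restricts to zero on the stable locus. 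Applying the symmetrisation map $\rho$ (which computes the Gysin pushforward along the flag bundle to the base) then shows $\rho$ of this class lies in the kernel of the map to $\chow^\bullet_{\mathbb{Q}}(X)$; one must be careful that $\rho$ is only an $A$-module map, not a ring map, but since $\rho(I_\tautological)$ is defined as the $A$-span of the $\rho$-images, $A$-linearity is all that is needed.

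The third and hardest step is the reverse inclusion: that the kernel is no larger than $I_\linear + \rho(I_\tautological)$. The natural route is a dimension count / Poincaré-series comparison. On one side, $X$ is smooth projective of dimension $1 - \langle \mathbf{d},\mathbf{d}\rangle$ (\cref{proposition:basic-properties}\ref{enumerate:basic-dimension}), and its Chow ring (which here agrees rationally with cohomology, by the cellular/torus-stratification structure of quiver moduli) has a known Poincaré polynomial, computable by the Harder--Narasimhan recursion. On the other side, one computes the Hilbert series of $A / (I_\linear + \rho(I_\tautological))$ and checks it matches. Concretely, I would stratify $\representationvariety{Q,\mathbf{d}}$ by Harder--Narasimhan type: the unstable locus $\representationvariety{Q,\mathbf{d}} \setminus \representationvariety[\stable\theta]{Q,\mathbf{d}}$ decomposes into locally closed pieces indexed by HN types, each an affine bundle over a product of smaller semistable loci, and the localisation sequence in equivariant Chow groups expresses $\chow^\bullet_{\mathbb{Q}}(X)$ as $A$ modulo the images of the pushforwards from these strata. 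The crux is then a purely combinatorial/commutative-algebra identity: that the ideal generated by all these HN-stratum pushforward classes equals $\rho(I_\tautological)$ after adjoining $I_\linear$ — i.e.\ that the ``obvious'' Euler classes of the minimal forbidden dimension vectors, symmetrised, already generate everything coming from the full HN recursion. This reduction — showing the minimal forbidden subrepresentation loci suffice, via the divisibility of the polynomials \eqref{equation:relation-from-forbidden} under ${\trianglelefteq}$ and an induction on HN types using the product structure of the strata — is where the real work lies, and it is exactly the content of \cite[\S5]{MR3318266}; I would follow that argument, using the universal morphism machinery of \cref{subsection:tautological-morphism} to make the identification of stratum normal bundles with the relevant $\Hom$/$\Ext$ bundles explicit and uniform.
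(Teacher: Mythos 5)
The paper does not prove this statement at all: it is quoted verbatim as \cite[Theorem~14]{MR3318266}, so there is no in-paper proof to compare against. Your sketch is a reasonable reconstruction of the strategy of that reference: identifying $A$ with the $\group{\mathbf{d}}$-equivariant Chow ring of the affine space $\representationvariety{Q,\mathbf{d}}$, getting surjectivity from King--Walter (\cref{theorem:king-walter}), checking that $I_\linear$ dies because of the normalisation by $\mathbf{a}$, interpreting the polynomials \eqref{equation:relation-from-forbidden} as Euler classes of the loci of representations admitting a forbidden subrepresentation (with $\rho$ as the Gysin pushforward along the flag bundle), and then establishing the reverse inclusion by a localisation/stratification argument. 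Two caveats. First, the parenthetical claim $\chow^\bullet(\modulistack[\stable\theta]{Q,\mathbf{d}})_{\mathbb{Q}}=\chow^\bullet(X)_{\mathbb{Q}}$ is not literally correct: $\group{\mathbf{d}}$ acts on the stable locus with stabiliser $\Gm$ everywhere, so the stable quotient stack is a $\Gm$-gerbe over $X$ rather than $X$ itself; this discrepancy is precisely the source of the extra linear relation $I_\linear$, and your argument only works once this is said correctly. Second, and more substantially, the entire content of the theorem is your ``third and hardest step,'' and you do not carry it out --- you explicitly defer to \cite[\S5]{MR3318266}. (Also note that the reference argues via the loci of representations with a forbidden subrepresentation and their incidence-variety resolutions rather than literally via the Harder--Narasimhan recursion you propose, though these are closely related.) As a blind proof the proposal is therefore an outline with the decisive step cited rather than proved; as a match to the paper it is consistent with the paper's own treatment, which is simply to cite the result.
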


\section{The tautological and Kodaira--Spencer morphism}
In \cref{subsection:tautological-morphism}
we construct an important morphism of vector bundles
in the study of moduli of quiver representations.
The morphism is certainly not novel,
but we believe the details of its construction in this generality are.
What \emph{is} novel (at least to our knowledge)
is its relationship to the Kodaira--Spencer morphism for quiver moduli in \cref{subsection:kodaira-spencer}.

In \cref{subsection:points}
(resp.~\cref{subsection:todd-class})
we explain how the tautological morphism
(resp.~tautological and Kodaira--Spencer morphism)
can be used to prove \cref{theorem:point-class,theorem:todd-class}.

\subsection{The tautological morphism}
\label{subsection:tautological-morphism}
Let $M$ and $N$ be two finite-dimensional representations of $Q$ over $k$.
Then applying the functor $\Hom_{kQ}(-,N)$
to the standard projective resolution of $M$ yields the \emph{tautological exact sequence}
\begin{equation}
  \label{equation:tautological-sequence}
  0 \to \Hom_{kQ}(M,N) \to \bigoplus_{i \in Q_0} \Hom(M_i,N_i) \xrightarrow[]{\sigma_{M,N}} \bigoplus_{a \in Q_1} \Hom(M_{\source(a)},N_{\target(a)}) \to \Ext_{kQ}^1(M,N) \to 0.
\end{equation}
The map $\sigma_{M,N}$ is given by $\sigma_{M,N}((f_i)_{i \in Q_0}) = (f_{\target(a)}\circ M_a - N_a\circ f_{\source(a)})_{a \in Q_1}$.

The morphism $\sigma_{M,N}$ also exists when~$k$ is replaced by a commutative $k$-algebra~$A$ and $M$ and $N$ are left $AQ$-modules.
The construction of $\sigma_{M,N}$ is compatible with base change, i.e.,~for a homomorphism $A \to B$ of a commutative $k$-algebras holds
\begin{equation}
  \label{equation:base-change}
  \sigma_{M,N} \otimes_A \id = \sigma_{M \otimes_A B,N \otimes_A B}.
\end{equation}
Moreover, if $M$ is projective as an $A$-module then the left $AQ$-module has a standard projective resolution of length one, analogous to the case over a field. An application of $\Hom_{AQ}(-,N)$ yields $\ker \sigma_{M,N} = \Hom_{AQ}(M,N)$ and $\coker \sigma_{M,N} = \Ext_{AQ}^1(M,N)$.

This in fact allows us to construct $\sigma$ in great generality over a scheme,
and even algebraic stack.
Let $S$ be an algebraic stack over $\Spec k$,
and let $\mathcal{M}$ and $\mathcal{N}$ be two representations of $Q$
in the category of quasicoherent sheaves on~$S$;
we call them quasicoherent left $\mathcal{O}_SQ$-modules.
We define a homomorphism of sheaves
\begin{equation}
  \label{equation:tautological-morphism-in-general}
  \sigma = \sigma_{\mathcal{M},\mathcal{N}}\colon
  \bigoplus_{i \in Q_0} \sheafHom_{\mathcal{O}_S}(\mathcal{M}_i,\mathcal{N}_i)
  \to
  \bigoplus_{a \in Q_1} \sheafHom_{\mathcal{O}_S}(\mathcal{M}_{\source(a)}, \mathcal{N}_{\target(a)})
\end{equation}
by mapping a section $(f_i)_{i \in Q_0}$ on $T\to S$
to~$(f_{\target(a)}\circ \mathcal{M}_a|_T - \mathcal{N}_a|_T\circ f_{\source(a)})_{a \in Q_1}$.

We define $\sheafHom_{\mathcal{O}_SQ}(\mathcal{M},\mathcal{N})$ as the kernel of $\sigma_{\mathcal{M},\mathcal{N}}$.
It is a quasicoherent $\mathcal{O}_S$-module whose sections over an open subset $U \subseteq S$ are
\begin{equation}
  \label{equation:sections-relative-Hom}
  \HH^0(U,\sheafHom_{\mathcal{O}_SQ}(\mathcal{M},\mathcal{N})) = \Hom_{\mathcal{O}_UQ}(\mathcal{M}|_U,\mathcal{N}|_U).
\end{equation}
It is easy to see that the functor $\sheafHom_{\mathcal{O}_SQ}(\mathcal{M},-)$
from quasicoherent left $\mathcal{O}_SQ$-modules to quasicoherent $\mathcal{O}_S$-modules is left exact.
Moreover, the category of quasicoherent left $\mathcal{O}_SQ$-modules has enough injectives.

\begin{definition}
  \label{definition:relative-Ext}
  Let $\mathcal{M}$ and $\mathcal{N}$ be two quasicoherent left $\mathcal{O}_SQ$-modules. We define
  \begin{equation}
    \sheafExt_{\mathcal{O}_SQ}^n(\mathcal{M},\mathcal{N})
    \colonequals
    (\mathrm{R}^n\sheafHom_{\mathcal{O}_SQ}(\mathcal{M},-))(\mathcal{N}).
  \end{equation}
\end{definition}

The functor $\sheafExt_{\mathcal{O}_SQ}^n$ is related to global extensions through a local-to-global spectral sequence
\begin{equation}
  \label{equation:local-to-global-ss}
  \mathrm{E}_2^{p,q} = \HH^p(S,\sheafExt_{\mathcal{O}_SQ}^q(\mathcal{M},\mathcal{N}))
  \Rightarrow \Ext_{\mathcal{O}_SQ}^{p+q}(\mathcal{M},\mathcal{N}).
\end{equation}

\begin{lemma}
  \label{lemma:balancing-like-statement}
  Let $\mathcal{M}$ be a quasicoherent left $\mathcal{O}_SQ$-module and suppose that it admits a resolution
  \begin{equation}
    \ldots \to \mathcal{E}_2 \to \mathcal{E}_1 \to \mathcal{E}_0 \to \mathcal{M} \to 0
  \end{equation}
  by quasicoherent left $\mathcal{O}_SQ$-modules $\mathcal{E}_n$
  which are acyclic for $\sheafHom_{\mathcal{O}_SQ}(-,\mathcal{N})$ for every quasicoherent left $\mathcal{O}_SQ$-module $\mathcal{N}$.
  Then there exist isomorphisms
  \begin{equation}
    \sheafExt_{\mathcal{O}_SQ}^n(\mathcal{M},\mathcal{N}) \cong \mathbb{H}^n(\sheafHom_{\mathcal{O}_SQ}(\mathcal{E}_*,\mathcal{N})).
  \end{equation}
  which are natural in $\mathcal{N}$.
\end{lemma}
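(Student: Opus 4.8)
The plan is to mimic the classical balancing argument for derived functors, using the hypothesis that the $\mathcal{E}_n$ are acyclic for $\sheafHom_{\mathcal{O}_SQ}(-,\mathcal{N})$ for \emph{every} $\mathcal{N}$. First I would fix an injective resolution $\mathcal{N}\to\mathcal{I}^*$ of $\mathcal{N}$ in the category of quasicoherent left $\mathcal{O}_SQ$-modules (which has enough injectives, as noted after \eqref{equation:sections-relative-Hom}). Form the first-quadrant double complex $\mathcal{K}^{p,q}=\sheafHom_{\mathcal{O}_SQ}(\mathcal{E}_p,\mathcal{I}^q)$ and consider its two spectral sequences. Computing cohomology first in the $\mathcal{E}$-direction: since each $\mathcal{I}^q$ is injective, $\sheafHom_{\mathcal{O}_SQ}(-,\mathcal{I}^q)$ is exact, so the rows are exact except at $p=0$, where they compute $\sheafHom_{\mathcal{O}_SQ}(\mathcal{M},\mathcal{I}^q)$; hence this spectral sequence degenerates and the total cohomology of $\mathcal{K}^{\bullet,\bullet}$ is $\mathbb{H}^n$ of $\sheafHom_{\mathcal{O}_SQ}(\mathcal{M},\mathcal{I}^*)=\sheafExt_{\mathcal{O}_SQ}^n(\mathcal{M},\mathcal{N})$ by \cref{definition:relative-Ext}.

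Next I would compute cohomology in the other ($\mathcal{I}$-)direction. Here the key input is the acyclicity hypothesis: for each fixed $p$, the column $\sheafHom_{\mathcal{O}_SQ}(\mathcal{E}_p,\mathcal{I}^*)$ computes $\sheafExt_{\mathcal{O}_SQ}^q(\mathcal{E}_p,\mathcal{N})$, which vanishes for $q>0$ precisely because $\mathcal{E}_p$ is $\sheafHom_{\mathcal{O}_SQ}(-,\mathcal{N})$-acyclic, and equals $\sheafHom_{\mathcal{O}_SQ}(\mathcal{E}_p,\mathcal{N})$ for $q=0$. So this spectral sequence also degenerates, at $\mathrm{E}_1$, and the total cohomology is $\mathbb{H}^n$ of the complex $\sheafHom_{\mathcal{O}_SQ}(\mathcal{E}_*,\mathcal{N})$. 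Comparing the two computations of the total cohomology of the single double complex $\mathcal{K}^{\bullet,\bullet}$ yields the desired isomorphism
\[
  \sheafExt_{\mathcal{O}_SQ}^n(\mathcal{M},\mathcal{N}) \cong \mathbb{H}^n(\sheafHom_{\mathcal{O}_SQ}(\mathcal{E}_*,\mathcal{N})).
\]

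For naturality in $\mathcal{N}$: a morphism $\mathcal{N}\to\mathcal{N}'$ lifts (uniquely up to homotopy) to a morphism of injective resolutions, hence induces a morphism of double complexes $\mathcal{K}^{\bullet,\bullet}\to\mathcal{K}'^{\bullet,\bullet}$ compatible with both filtrations; passing to the degenerate spectral sequences shows the isomorphism is compatible with the maps induced on $\sheafExt$ and on $\mathbb{H}^n(\sheafHom_{\mathcal{O}_SQ}(\mathcal{E}_*,-))$. I expect the main obstacle to be the bookkeeping rather than anything conceptual: one must check that the edge morphisms of the two spectral sequences genuinely realize the derived-functor identifications (this is where \cref{lemma:balancing-like-statement}'s phrasing via $\mathrm{R}^n$ from \cref{definition:relative-Ext} is used), and that the two degeneracies are of the standard collapsing type so that the abutment filtrations are trivial in each total degree. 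A minor point worth stating explicitly is that $\sheafHom_{\mathcal{O}_SQ}(-,\mathcal{I}^q)$ is exact: this follows since kernels of the tautological morphism $\sigma$ behave well under the exact functors $\sheafHom_{\mathcal{O}_S}$ into an injective, or more directly from the fact that $\mathcal{I}^q$ being injective in $\mathcal{O}_SQ$-modules makes $\sheafHom_{\mathcal{O}_SQ}(-,\mathcal{I}^q)$ take short exact sequences to short exact sequences of sheaves (exactness can be checked locally, where it reduces to injectivity over affine opens).
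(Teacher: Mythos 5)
Your argument is correct, but it takes a genuinely different (though equally classical) route from the paper's. You balance the two computations via the first-quadrant double complex $\sheafHom_{\mathcal{O}_SQ}(\mathcal{E}_p,\mathcal{I}^q)$ attached to an injective resolution $\mathcal{N}\to\mathcal{I}^*$ and compare its two degenerate spectral sequences; the paper instead argues with universal $\delta$-functors: both $T^n=\sheafExt_{\mathcal{O}_SQ}^n(\mathcal{M},-)$ and $\tilde{T}^n=\mathbb{H}^n(\sheafHom_{\mathcal{O}_SQ}(\mathcal{E}_*,-))$ are $\delta$-functors (the latter because the acyclicity of the $\mathcal{E}_p$ makes $\sheafHom_{\mathcal{O}_SQ}(\mathcal{E}_p,-)$ send short exact sequences of coefficients to short exact sequences), both are universal (the former by definition of derived functors, the latter because it is effaceable, vanishing on injectives in positive degrees), and they agree in degree zero, hence are uniquely isomorphic as $\delta$-functors. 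The $\delta$-functor route buys naturality in $\mathcal{N}$ --- in fact compatibility with connecting homomorphisms --- for free, and avoids the bookkeeping about edge maps and independence of the chosen injective resolution that you rightly flag as the residual work in your approach; your route has the advantage of producing the comparison map very explicitly. Note that both proofs ultimately rest on the same technical input, namely that $\sheafHom_{\mathcal{O}_SQ}(-,\mathcal{I})$ is exact when $\mathcal{I}$ is injective (you need it for the exactness of the rows, the paper needs it for the effaceability of $\tilde{T}^{>0}$); your parenthetical justification by checking surjectivity locally via \eqref{equation:sections-relative-Hom} tacitly uses that the restriction of an injective quasicoherent $\mathcal{O}_SQ$-module to an open substack remains injective, which is the one point I would spell out rather than wave at.
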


\begin{proof}
  We fix~$\mathcal{M}$ as in the statement.
  Let $T \colonequals (T^n)_{n \geq 0}$
  be given by $T^n(\mathcal{N})\colonequals \Ext^n(\mathcal{M},\mathcal{N})$
  and let $\tilde{T} \colonequals (\tilde{T}^n)_{n \geq 0}$
  be given by $\tilde{T}^n(\mathcal{N}) \colonequals \mathbb{H}^n(\sheafHom_{\mathcal{O}_SQ}(\mathcal{E}_*,\mathcal{N}))$.
  The collection of functors $T$ is a universal $\delta$-functor by definition.

  As the complex $\mathcal{E}_*$ consists of $\sheafHom_{\mathcal{O}_SQ}(-,\mathcal{N})$-acyclic objects,
  the collection $\tilde{T}$ is a $\delta$-functor.
  It is universal because for every $n > 0$,
  the functor $\tilde{T}^n$ vanishes on injective $\mathcal{O}_SQ$-modules.

  Since the functors $T^0$ and $\tilde{T}^0$ both agree with $\sheafHom_{\mathcal{O}_SQ}(\mathcal{M},-)$,
  we deduce that there exists a unique isomorphism~$T \to \tilde{T}$ of $\delta$-functors which extends the isomorphism $T^0 \to \tilde{T}^0$.
\end{proof}

We apply \cref{lemma:balancing-like-statement} to the following situation.
Suppose that $\mathcal{M}$ is locally free of finite rank as $\mathcal{O}_S$-module.
Consider the indecomposable projective $kQ$-module $P(j)$
and a left $\mathcal{O}_SQ$-module of the form $P(j) \otimes \mathcal{E}$
where $\mathcal{E}$ is a locally free $\mathcal{O}_S$-module of finite rank.
It satisfies
\begin{equation}
  \sheafHom_{\mathcal{O}_SQ}(P(j)\otimes\mathcal{E},\mathcal{N}) = \sheafHom_{\mathcal{O}_S}(\mathcal{E},\mathcal{N}_j)
\end{equation}
and thus it is acyclic for the functor $\sheafHom_{\mathcal{O}_SQ}(-,\mathcal{N})$.
We consider the resolution
\begin{equation}
  0 \to \bigoplus_{a \in Q_1} P(\target(a)) \otimes \mathcal{M}_{\source(a)} \to \bigoplus_{i \in Q_0} P(i) \otimes \mathcal{M}_i \to \mathcal{M} \to 0
\end{equation}
by acyclic objects.
Applying the functor~$\sheafHom_{\mathcal{O}_SQ}(-,\mathcal{N})$ then yields the global version of \eqref{equation:tautological-sequence}.

\begin{proposition}
  \label{proposition:tautological-sequence-sheafy}
  Let $\mathcal{M}$ and $\mathcal{N}$ be quasicoherent left $\mathcal{O}_SQ$-modules
  and assume that $\mathcal{M}$ is locally free of finite rank as an $\mathcal{O}_S$-module.
  Then there exists an exact sequence
  \begin{equation}
    0
    \to \sheafHom_{\mathcal{O}_SQ}(\mathcal{M},\mathcal{N})
    \to \bigoplus_{i \in Q_0} \sheafHom_{\mathcal{O}_S}(\mathcal{M}_i,\mathcal{N}_i)
    \xrightarrow[]{\sigma_{\mathcal{M},\mathcal{N}}} \bigoplus_{a \in Q_1} \sheafHom_{\mathcal{O}_S}(\mathcal{M}_{\source(a)},\mathcal{N}_{\target(a)})
    \to \sheafExt_{\mathcal{O}_SQ}^1(\mathcal{M},\mathcal{N})
    \to 0.
  \end{equation}
  and $\sheafExt_{\mathcal{O}_SQ}^n(\mathcal{M},\mathcal{N}) = 0$ for all $n \geq 2$.
\end{proposition}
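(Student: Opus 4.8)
The plan is to deduce the proposition from \cref{lemma:balancing-like-statement} by exhibiting a concrete finite resolution of $\mathcal{M}$ by objects that are acyclic for the functor $\sheafHom_{\mathcal{O}_SQ}(-,\mathcal{N})$, for every $\mathcal{N}$. The resolution to use is the ``standard'' (Koszul-type) resolution of a left $\mathcal{O}_SQ$-module in terms of the indecomposable projectives $P(j)$ of the path algebra, base-changed along $k \to \mathcal{O}_S$ and tensored with the components $\mathcal{M}_i$ and $\mathcal{M}_{\source(a)}$; namely
\begin{equation}
  0 \to \bigoplus_{a \in Q_1} P(\target(a)) \otimes \mathcal{M}_{\source(a)} \to \bigoplus_{i \in Q_0} P(i) \otimes \mathcal{M}_i \to \mathcal{M} \to 0,
\end{equation}
which was already written down in the excerpt just before the statement. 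The two things that must be checked are that this sequence is exact, and that each term $P(j) \otimes \mathcal{E}$ (with $\mathcal{E}$ locally free of finite rank) is acyclic for $\sheafHom_{\mathcal{O}_SQ}(-,\mathcal{N})$.

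First I would verify exactness of the displayed sequence. This is a local question on $S$, so one may assume $\mathcal{M}_i$ and $\mathcal{M}_{\source(a)}$ are free $\mathcal{O}_S$-modules; then the sequence is the standard projective bimodule resolution of $kQ$ (encoding the defining relations $e_i$ and the arrows $a$) applied to $\mathcal{M}$, tensored with $\mathcal{O}_S$, and exactness is the familiar statement that $kQ$ has global dimension $\leq 1$ with this explicit two-term resolution — here crucially using that $\mathcal{M}$ is locally free over $\mathcal{O}_S$, so tensoring is exact. Next I would check the acyclicity claim: the identity $\sheafHom_{\mathcal{O}_SQ}(P(j)\otimes\mathcal{E},\mathcal{N}) = \sheafHom_{\mathcal{O}_S}(\mathcal{E},\mathcal{N}_j)$ (already recorded in the excerpt) shows the functor $\sheafHom_{\mathcal{O}_SQ}(P(j)\otimes\mathcal{E},-)$ is exact — it is just a twisted evaluation at the vertex $j$ — hence $\mathbb{R}^n\sheafHom_{\mathcal{O}_SQ}(P(j)\otimes\mathcal{E},-) = 0$ for $n \geq 1$, which is exactly acyclicity for $\sheafHom_{\mathcal{O}_SQ}(-,\mathcal{N})$ in the sense demanded by \cref{lemma:balancing-like-statement}.

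Having established both points, I would apply \cref{lemma:balancing-like-statement} with $\mathcal{E}_0 = \bigoplus_{i} P(i)\otimes\mathcal{M}_i$, $\mathcal{E}_1 = \bigoplus_{a} P(\target(a))\otimes\mathcal{M}_{\source(a)}$, and $\mathcal{E}_n = 0$ for $n \geq 2$. The lemma identifies $\sheafExt^n_{\mathcal{O}_SQ}(\mathcal{M},\mathcal{N})$ with the cohomology of the two-term complex obtained by applying $\sheafHom_{\mathcal{O}_SQ}(-,\mathcal{N})$, which by the evaluation identity is precisely
\begin{equation}
  \bigoplus_{i \in Q_0} \sheafHom_{\mathcal{O}_S}(\mathcal{M}_i,\mathcal{N}_i) \xrightarrow{\ \sigma_{\mathcal{M},\mathcal{N}}\ } \bigoplus_{a \in Q_1} \sheafHom_{\mathcal{O}_S}(\mathcal{M}_{\source(a)},\mathcal{N}_{\target(a)}),
\end{equation}
with the differential identified with $\sigma_{\mathcal{M},\mathcal{N}}$ by tracing through the definition of the connecting maps in the resolution (the map $P(\target(a))\otimes\mathcal{M}_{\source(a)} \to P(i)\otimes\mathcal{M}_i$ is built from the arrow action and the $\mathcal{M}_a$, and dualizing gives exactly the commutator formula $(f_{\target(a)}\circ\mathcal{M}_a - \mathcal{N}_a\circ f_{\source(a)})$). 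Since the complex is concentrated in degrees $0$ and $1$, its $H^0$ is $\ker\sigma_{\mathcal{M},\mathcal{N}} = \sheafHom_{\mathcal{O}_SQ}(\mathcal{M},\mathcal{N})$, its $H^1$ is $\coker\sigma_{\mathcal{M},\mathcal{N}} = \sheafExt^1_{\mathcal{O}_SQ}(\mathcal{M},\mathcal{N})$, and all higher $\sheafExt$ vanish, giving the asserted four-term exact sequence and the vanishing for $n \geq 2$. The main obstacle I anticipate is purely bookkeeping: checking that the differential in the resolved complex is literally $\sigma_{\mathcal{M},\mathcal{N}}$ as defined in \eqref{equation:tautological-morphism-in-general}, rather than merely agreeing with it up to a canonical isomorphism or sign — everything else is local and reduces immediately to the classical over-a-field statement.
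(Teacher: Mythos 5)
Your proposal is correct and follows essentially the same route as the paper: the standard two-term resolution of $\mathcal{M}$ by modules of the form $P(j)\otimes\mathcal{E}$, the acyclicity of these via the identification $\sheafHom_{\mathcal{O}_SQ}(P(j)\otimes\mathcal{E},\mathcal{N})=\sheafHom_{\mathcal{O}_S}(\mathcal{E},\mathcal{N}_j)$, and an application of \cref{lemma:balancing-like-statement} to identify the cohomology of the resulting two-term complex, whose differential is $\sigma_{\mathcal{M},\mathcal{N}}$. Your added care about local exactness of the resolution and about matching the differential with \eqref{equation:tautological-morphism-in-general} is sound bookkeeping that the paper leaves implicit.
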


\begin{lemma}
  \label{lemma:fibers-hom-ext}
  In the situation of \cref{proposition:tautological-sequence-sheafy},
  assume that $S$ is a $k$-scheme and
  let~$p \in S$ be a closed point.
  Then
  \begin{equation}
    \begin{aligned}
      \ker(\sigma(p)) &= \Hom_{\kappa(p)Q}(\mathcal{M}(p),\mathcal{N}(p)) \\
      \coker(\sigma(p)) &= \Ext^1_{\kappa(p)Q}(\mathcal{M}(p),\mathcal{N}(p))
    \end{aligned}
  \end{equation}
\end{lemma}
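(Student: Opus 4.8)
The plan is to deduce the statement from \cref{proposition:tautological-sequence-sheafy} by a base-change argument along the closed immersion $\iota_p\colon\Spec\kappa(p)\hookrightarrow S$. The key point is that the tautological morphism is compatible with base change, which was recorded in \eqref{equation:base-change} over affine schemes and which globalizes immediately from the explicit formula for $\sigma$ in \eqref{equation:tautological-morphism-in-general}: pulling back along $\iota_p$ the section $(f_i)$ goes to $(f_{\target(a)}\circ\mathcal{M}_a-\mathcal{N}_a\circ f_{\source(a)})$, and this is exactly the formula defining $\sigma_{\mathcal{M}(p),\mathcal{N}(p)}$. Hence $\iota_p^*\sigma_{\mathcal{M},\mathcal{N}}=\sigma_{\mathcal{M}(p),\mathcal{N}(p)}$, i.e.\ $\sigma(p)=\sigma_{\mathcal{M}(p),\mathcal{N}(p)}$ as a morphism of $\kappa(p)$-vector spaces. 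Over the field $\kappa(p)$, the source and target of $\sigma_{\mathcal{M}(p),\mathcal{N}(p)}$ are visibly $\bigoplus_i\Hom(\mathcal{M}(p)_i,\mathcal{N}(p)_i)$ and $\bigoplus_a\Hom(\mathcal{M}(p)_{\source(a)},\mathcal{N}(p)_{\target(a)})$, so the tautological exact sequence \eqref{equation:tautological-sequence} over $\kappa(p)$ identifies $\ker\sigma(p)=\Hom_{\kappa(p)Q}(\mathcal{M}(p),\mathcal{N}(p))$ and $\coker\sigma(p)=\Ext^1_{\kappa(p)Q}(\mathcal{M}(p),\mathcal{N}(p))$, which is the claim.

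The one place requiring care — and the step I expect to be the main obstacle — is the interaction between taking fibers and taking kernels/cokernels: a priori $\ker(\sigma(p))$ need not equal $(\ker\sigma)(p)$, and likewise for cokernels, since $\iota_p^*$ is only right exact in general. For the cokernel this causes no trouble: $\iota_p^*$ is right exact, so $\coker(\sigma(p))=\iota_p^*\coker(\sigma_{\mathcal{M},\mathcal{N}})=\sheafExt^1_{\mathcal{O}_SQ}(\mathcal{M},\mathcal{N})(p)$, and independently the formula $\coker\sigma(p)=\Ext^1_{\kappa(p)Q}(\mathcal{M}(p),\mathcal{N}(p))$ follows directly from the tautological sequence over the residue field as above — one does not even need to know $\sheafExt^1(\mathcal{M},\mathcal{N})(p)$ equals $\Ext^1_{\kappa(p)Q}(\mathcal{M}(p),\mathcal{N}(p))$. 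For the kernel the honest argument is simply that $\sigma(p)$ \emph{is} the morphism $\sigma_{\mathcal{M}(p),\mathcal{N}(p)}$ by the base-change identity, so $\ker\sigma(p)=\ker\sigma_{\mathcal{M}(p),\mathcal{N}(p)}=\Hom_{\kappa(p)Q}(\mathcal{M}(p),\mathcal{N}(p))$ by \eqref{equation:tautological-sequence} applied over $\kappa(p)$ (here we use that $\mathcal{M}(p)$ is a finite-dimensional $\kappa(p)$-representation, which holds since $\mathcal{M}$ is locally free of finite rank, so the standard projective resolution of length one exists and \eqref{equation:tautological-sequence} is available).

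In short, the whole proof is: (i) observe $\sigma(p)=\sigma_{\mathcal{M}(p),\mathcal{N}(p)}$ from the base-change compatibility of the explicit formula defining $\sigma$; (ii) invoke the tautological exact sequence \eqref{equation:tautological-sequence} for the pair of representations $\mathcal{M}(p),\mathcal{N}(p)$ over the field $\kappa(p)$ to read off kernel and cokernel. No spectral sequence or derived-category input is needed; \cref{lemma:balancing-like-statement} and \cref{proposition:tautological-sequence-sheafy} are used only insofar as they justify that $\sigma_{\mathcal{M},\mathcal{N}}$ and its fiberwise restriction $\sigma(p)$ deserve to be called the tautological morphism, so that the identification with $\sigma_{\mathcal{M}(p),\mathcal{N}(p)}$ is legitimate.
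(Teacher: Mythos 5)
Your proposal is correct and follows essentially the same route as the paper: the paper's proof consists precisely of combining the base-change identity \eqref{equation:base-change} (giving $\sigma(p)=\sigma_{\mathcal{M}(p),\mathcal{N}(p)}$) with the tautological exact sequence of \cref{proposition:tautological-sequence-sheafy} applied over the residue field. Your additional remarks on why taking fibers commutes with the relevant kernel and cokernel here are a sound elaboration of what the paper leaves implicit.
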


\begin{proof}
  This follows from \cref{proposition:tautological-sequence-sheafy} and \eqref{equation:base-change}.
\end{proof}

For the proofs of \cref{proposition:diagonal,theorem:todd-class}
we will need the following result.
\begin{lemma}
  \label{lemma:sublinebundle}
  Let $S$ be a $k$-scheme of finite type
  and let $\mathcal{M}$ and $\mathcal{N}$ be two left $\mathcal{O}_SQ$-modules,
  locally free of finite rank as $\mathcal{O}_S$-modules, such that
  \begin{equation}
    \dim \Hom_{kQ}(\mathcal{M}(p),\mathcal{N}(p)) = 1
  \end{equation}
  for all $k$-valued points $p$ of $S$.
  Then the kernel of $\sigma$ is a locally split line subbundle of $\bigoplus_{i \in Q_0} \mathcal{M}_i^\vee \otimes \mathcal{N}_i$.
\end{lemma}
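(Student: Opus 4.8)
The plan is to exploit \cref{proposition:tautological-sequence-sheafy}, which gives the exact sequence
\begin{equation*}
  0 \to \sheafHom_{\mathcal{O}_SQ}(\mathcal{M},\mathcal{N}) \to \bigoplus_{i \in Q_0} \mathcal{M}_i^\vee \otimes \mathcal{N}_i \xrightarrow{\sigma} \bigoplus_{a \in Q_1} \mathcal{M}_{\source(a)}^\vee \otimes \mathcal{N}_{\target(a)},
\end{equation*}
so that the kernel in question is precisely the quasicoherent sheaf $\sheafHom_{\mathcal{O}_SQ}(\mathcal{M},\mathcal{N})$. Since $S$ is of finite type over $k$ and all the sheaves involved are coherent, $\mathcal{K} \colonequals \sheafHom_{\mathcal{O}_SQ}(\mathcal{M},\mathcal{N})$ is coherent. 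The claim is then that $\mathcal{K}$ is a line subbundle which is locally a direct summand. Because the statement is local on $S$, I may pass to an affine neighbourhood and even to the (henselian or completed) local ring at a point, so the task reduces to a fibrewise-rank computation combined with a splitting argument.

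First I would compute the fibres. By \cref{lemma:fibers-hom-ext} applied with $p$ a closed point, $\ker(\sigma(p)) = \Hom_{\kappa(p)Q}(\mathcal{M}(p),\mathcal{N}(p))$; since $k$ is algebraically closed every closed point has residue field $k$, and the hypothesis says this Hom-space is $1$-dimensional for all such $p$. However, $\ker(\sigma(p))$ need not a priori equal $\mathcal{K}(p) = \mathcal{K} \otimes \kappa(p)$: there is only a natural map $\mathcal{K}(p) \to \ker(\sigma(p))$ coming from right-exactness of $-\otimes\kappa(p)$ applied to $0 \to \mathcal{K} \to \bigoplus \mathcal{M}_i^\vee\otimes\mathcal{N}_i \to \sigma(\bigoplus\cdots)$. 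So the key step is to show $\mathcal{K}$ is locally free of rank one, after which the inclusion $\mathcal{K} \hookrightarrow \bigoplus \mathcal{M}_i^\vee\otimes\mathcal{N}_i$ is automatically a subbundle inclusion (a rank-one subbundle of a vector bundle that is a kernel, hence has locally free cokernel-image, is locally split). To get local freeness of $\mathcal{K}$ I would argue as follows: the image sheaf $\mathcal{I} \colonequals \image(\sigma)$ sits in $0 \to \mathcal{K} \to \mathcal{V} \to \mathcal{I} \to 0$ with $\mathcal{V} = \bigoplus_{i}\mathcal{M}_i^\vee\otimes\mathcal{N}_i$ locally free; and $\mathcal{I} \subseteq \bigoplus_a \mathcal{M}_{\source(a)}^\vee\otimes\mathcal{N}_{\target(a)}$ is a subsheaf of a locally free sheaf over a (locally) regular ring, hence torsion-free, so has no embedded points. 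By upper semicontinuity of $p \mapsto \dim_{\kappa(p)}\mathcal{K}(p)$ and the surjection $\mathcal{K}(p) \twoheadrightarrow \ker(\sigma(p))$ of dimension $1$, together with the fact that $\mathcal{K}$ is the kernel of a map of locally frees (hence $\mathcal{K}$ itself is torsion-free, thus has full support and nonzero fibre everywhere), one concludes $\dim_{\kappa(p)}\mathcal{K}(p) = 1$ for every $p$, and a coherent sheaf with constant fibre dimension $1$ over a reduced (indeed regular, but reducedness suffices together with torsion-freeness, or one invokes the local criterion via $\mathcal{V}/\mathcal{K}$ being torsion-free hence flat in codimension considerations) scheme is locally free of rank one. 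Cleanest is: $\mathcal{V}/\mathcal{K} = \mathcal{I}$ is torsion-free, and a quotient of a locally free sheaf by a subsheaf with torsion-free quotient, with the quotient of constant corank, forces the subsheaf to be a subbundle — this is the standard fact that a coherent subsheaf $\mathcal{K}$ of a locally free $\mathcal{V}$ is a subbundle iff $\mathcal{V}/\mathcal{K}$ is locally free, and $\mathcal{V}/\mathcal{K}=\mathcal{I}$ is torsion-free of constant rank $\rk\mathcal{V}-1$ hence (reflexive torsion-free of corank one inside locally free, over a normal base — or just: $\mathcal{I}$ has homological dimension $\le$ the depth deficiency, and its double dual agrees with it in codimension one) locally free.

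The main obstacle is this subtle point that fibrewise the naive kernel $\ker(\sigma(p))$ may differ from the fibre $\mathcal{K}(p)$ of the sheaf-theoretic kernel, because taking kernels does not commute with $\otimes\kappa(p)$. I would handle it by instead controlling the \emph{cokernel side}: set $\mathcal{Q} \colonequals \coker\big(\mathcal{V} \xrightarrow{\sigma} \mathcal{W}\big)$ with $\mathcal{W}=\bigoplus_a\mathcal{M}_{\source(a)}^\vee\otimes\mathcal{N}_{\target(a)}$; right-exactness of $-\otimes\kappa(p)$ gives $\mathcal{Q}(p) = \coker(\sigma(p)) = \Ext^1_{kQ}(\mathcal{M}(p),\mathcal{N}(p))$, whose dimension by the fibrewise Euler characteristic equals $\dim\Hom - \langle \dimvect\mathcal{M}(p),\dimvect\mathcal{N}(p)\rangle$, a \emph{constant} (the Euler form is locally constant and $\dim\Hom=1$ by hypothesis). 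Hence $\mathcal{Q}$ has locally constant fibre dimension over the reduced scheme $S$, so $\mathcal{Q}$ is locally free; therefore $\mathcal{I}=\ker(\mathcal{W}\to\mathcal{Q})$ has locally free cokernel $\mathcal{Q}$, hence $\mathcal{I}$ is a subbundle of $\mathcal{W}$, in particular locally free; then $\mathcal{K}=\ker(\mathcal{V}\to\mathcal{I})$ is a kernel of a surjection of locally free sheaves, hence locally free, and commutes with base change, so $\mathcal{K}(p)=\ker(\sigma(p))$ is $1$-dimensional for all $p$, giving $\rk\mathcal{K}=1$; finally $\mathcal{V}/\mathcal{K}=\mathcal{I}$ is locally free, so $\mathcal{K}\hookrightarrow\mathcal{V}$ is locally split. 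This reroutes the argument through the cokernel, where base change behaves well, and is the crux of the proof.
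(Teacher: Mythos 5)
Your final argument --- rerouting through the cokernel, using right-exactness of $-\otimes\kappa(p)$ to identify $\mathcal{Q}(p)=\coker(\sigma(p))=\Ext^1_{kQ}(\mathcal{M}(p),\mathcal{N}(p))$, and using constancy of the Euler form to see that this dimension is locally constant --- is a genuinely different route from the paper's. The paper instead observes that the hypothesis forces $\bigwedge^{e}\sigma=0$ while $\bigwedge^{e-1}\sigma$ vanishes nowhere, where $e=\rk\bigoplus_{i\in Q_0}\mathcal{M}_i^\vee\otimes\mathcal{N}_i$, so that $\sigma$ has constant rank $e-1$, and then quotes the standard criterion \cite[\href{https://stacks.math.columbia.edu/tag/0FWH}{Tag 0FWH}]{stacks-project} that a constant-rank map of finite locally free modules has locally free, locally split kernel. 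Both arguments come down to showing that the rank of $\sigma$ does not jump; yours has the merit of making the role of the fibrewise Euler characteristic explicit.

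There is, however, a genuine gap at the decisive step: you deduce that $\mathcal{Q}$ is locally free from its locally constant fibre dimension ``over the reduced scheme $S$''. The lemma does not assume $S$ reduced, and the implication fails without reducedness: over $R=k[t]/(t^2)$ the module $\mathcal{Q}=R/(t)\cong k$ has constant fibre dimension $1$ at the unique closed point but is not free, and correspondingly $t\colon R\to R$ has constant fibrewise rank yet its kernel $(t)\cong k$ is not a locally split line subbundle. This matters for the intended application: in the proof of \cref{proposition:diagonal} the lemma is applied to $S=\degeneracy_{e-1}(\sigma)$ with its natural degeneracy-locus scheme structure, whose reducedness is exactly what that proposition sets out to prove, so it cannot be assumed. (In that situation the extra input one has is the scheme-theoretic vanishing $\bigwedge^{e}\sigma=0$, which holds by the very definition of the degeneracy locus; any complete argument, including the paper's, has to lean on this rather than on fibrewise data at closed points alone.) Two smaller points: constancy of the fibre dimension is only hypothesised at $k$-valued points, so even in the reduced case you must invoke that a finite-type $k$-scheme is Jacobson, together with upper semicontinuity, to propagate it to all points; and the claimed ``surjection $\mathcal{K}(p)\twoheadrightarrow\ker(\sigma(p))$'' in your first sketch is not a surjection in general, since kernels do not commute with $\otimes\,\kappa(p)$ --- this does not affect your final cokernel argument, which supersedes it, but it should be removed.
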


\begin{proof}
  Denote~$e = \rk \bigoplus_{i \in Q_0} \mathcal{M}_i^\vee \otimes \mathcal{N}_i$.
  The assumption that $\Hom_{kQ}(\mathcal{M}(p),\mathcal{N}(p))$ is one-dimensional
  ensures that~$\bigwedge^e \sigma = 0$
  and~$\bigwedge^{e-1} \sigma$ vanishes nowhere.
  In other words, top degeneracy loci are~$\degeneracy_{e-1}(f)=S$ and~$\degeneracy_{e-2}(f)=\emptyset$,
  as these degeneracy loci can be described as the vanishing loci of~$\bigwedge^e\sigma$ resp.~$\bigwedge^{e-1}\sigma$.
  Thus the rank of~$\sigma$ is constant~$e-1$ at all points~$s\in S$,
  and~$\ker\sigma$ is then a locally split line subbundle
  by the usual argument \cite[\href{https://stacks.math.columbia.edu/tag/0FWH}{Tag 0FWH}]{stacks-project}.
\end{proof}

\begin{example}
  \label{example:two-settings}
  This construction of~$\sigma$ may be applied to $S = X \times Y$, where
  \begin{itemize}
    \item $X$ and $Y$ are both moduli stacks $X = \modulistack{Q,\mathbf{d}}$
      and $Y = \modulistack{Q,\mathbf{e}}$,
    \item $X$ and $Y$ are both moduli spaces of stable representations $X = \modulispace[\stable{\theta}]{Q,\mathbf{d}}$
      and $Y = \modulispace[\stable{\eta}]{Q,\mathbf{e}}$,
      with $\mathbf{d}$ and~$\mathbf{e}$ both indivisible.
  \end{itemize}
  In both cases,
  let $\mathcal{U}$ be the universal representation of $X$
  and $\mathcal{V}$ the universal representation of $Y$.
  We consider~$\mathcal{M} = p_1^*\mathcal{U}$ and $\mathcal{N} = p_2^*\mathcal{V}$
  and obtain the exact sequence
  \begin{equation}
    \label{equation:tautological-morphism-over-product}
    0
    \to \sheafHom_{\mathcal{O}_SQ}(p_1^*\mathcal{U},p_2^*\mathcal{V})
    \to \bigoplus_{i \in Q_0} \mathcal{U}_i^\vee \boxtimes \mathcal{V}_i
    \xrightarrow[]{\sigma} \bigoplus_{a \in Q_1} \mathcal{U}_{\source(a)}^\vee \boxtimes \mathcal{V}_{\target(a)}
    \to \sheafExt_{\mathcal{O}_SQ}^1(p_1^*\mathcal{U},p_2^*\mathcal{V})
    \to 0.
  \end{equation}
  We will in fact only ever need~$X=Y$ in what follows.
\end{example}

\subsection{The Kodaira--Spencer morphism via Atiyah classes}
\label{subsection:kodaira-spencer}
In order to describe the Todd class in \cref{theorem:todd-class}
we need to identify the cokernel of the restriction of~$\sigma$ to the diagonal
with the tangent bundle.
There exists an equivariant proof for this
(which we will also give, see the second proof of \cref{lemma:4-term-tangent-sequence})
but the following proposition is more intrinsic,
and more general,
and uses the moduli-theoretic interpretation.
\begin{proposition}
  \label{proposition:kodaira-spencer}
  Let $S$ be a finite-type $k$-scheme
  and let $\mathcal{M}$ be a coherent left $\mathcal{O}_SQ$-module which
  is locally free as an~$\mathcal{O}_S$-module.
  \begin{enumerate}
    \item
      \label{enumerate:kodaira-spencer-morphism}
      There exists a Kodaira--Spencer type morphism
      \begin{equation}
        \label{equation:kodaira-spencer}
        \kodairaspencer_{S,\mathcal{M}}\colon \tangent_S \to \sheafExt_{\mathcal{O}_SQ}^1(\mathcal{M},\mathcal{M}).
      \end{equation}

    \item
      \label{enumerate:kodaira-spencer-isomorphism}
      If $X=\modulispace[\stable{\theta}]{Q,\mathbf{d}}$ for an acyclic quiver
      and a $\theta$-coprime dimension vector,
      and $\mathcal{M} = \mathcal{U}$ is the universal representation,
      then
      \begin{equation}
        \label{equation:kodaira-spencer-isomorphism}
        \kodairaspencer=\kodairaspencer_{X,\mathcal{U}}\colon\tangent_X\to\sheafExt_{\mathcal{O}_XQ}(\mathcal{U},\mathcal{U})
      \end{equation}
      is an isomorphism.
  \end{enumerate}
\end{proposition}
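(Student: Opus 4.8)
The plan is to build the Kodaira--Spencer morphism \eqref{equation:kodaira-spencer} as the composite of the Atiyah class of $\mathcal{M}$ with a contraction, exactly as in the classical construction for coherent sheaves, but carried out in the category of left $\mathcal{O}_SQ$-modules. First I would recall the Atiyah extension for a left $\mathcal{O}_SQ$-module: the first jet sequence produces a class
\[
  \At(\mathcal{M}) \in \Ext^1_{\mathcal{O}_SQ}(\mathcal{M},\mathcal{M}\otimes_{\mathcal{O}_S}\Omega_S),
\]
since tensoring with $\Omega_S$ is $\mathcal{O}_S$-linear and the quiver structure maps $\mathcal{M}_a$ are $\mathcal{O}_S$-linear, hence commute with forming jets; one must check that the usual Atiyah sequence of $\mathcal{M}$ as an $\mathcal{O}_S$-module is compatible with the $Q$-action, which is routine because everything in sight is natural in $\mathcal{O}_S$-modules. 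Interpreting this class via the local-to-global spectral sequence \eqref{equation:local-to-global-ss} and \cref{proposition:tautological-sequence-sheafy} (so that only $\sheafExt^0$ and $\sheafExt^1$ appear), the component in $\HH^0(S,\sheafExt^1_{\mathcal{O}_SQ}(\mathcal{M},\mathcal{M}\otimes\Omega_S)) = \Hom_{\mathcal{O}_S}(\tangent_S,\sheafExt^1_{\mathcal{O}_SQ}(\mathcal{M},\mathcal{M}))$ is by definition $\kodairaspencer_{S,\mathcal{M}}$. This settles part~\ref{enumerate:kodaira-spencer-morphism}.

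For part~\ref{enumerate:kodaira-spencer-isomorphism}, set $X=\modulispace[\stable{\theta}]{Q,\mathbf{d}}$ with $\mathcal{M}=\mathcal{U}$. The strategy is to check that $\kodairaspencer$ is an isomorphism fiberwise over every closed point $x_0\in X$ and then conclude, since both sides are vector bundles (the left side obviously, the right side because $\sheafExt^1_{\mathcal{O}_XQ}(\mathcal{U},\mathcal{U})$ has locally constant rank: by \cref{lemma:fibers-hom-ext} its fiber at $x_0$ is $\Ext^1_{kQ}(V,V)$ where $V$ is the stable representation classified by $x_0$, and $\dim\Ext^1_{kQ}(V,V) = 1-\langle\mathbf{d},\mathbf{d}\rangle = \dim X$ by stability together with \cref{enumerate:basic-dimension} of \cref{proposition:basic-properties}). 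A morphism of vector bundles of equal rank that is injective (equivalently surjective) on every fiber is an isomorphism, so it suffices to prove injectivity of $\kodairaspencer(x_0)\colon \tangent_{X,x_0}\to\Ext^1_{kQ}(V,V)$. Here I would invoke the modular interpretation: by \cref{remark:functor-of-points}, a tangent vector at $x_0$ is a morphism $\Spec k[\varepsilon]\to X$ lifting $x_0$, i.e.\ (up to the twist by a line bundle, which is trivial on $\Spec k[\varepsilon]$) a deformation of $V$ to a left $k[\varepsilon]Q$-module flat over $k[\varepsilon]$; such deformations are classified by $\Ext^1_{kQ}(V,V)$ via the standard deformation-theory dictionary, and the fiber of $\kodairaspencer$ is precisely this classifying map. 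Its injectivity is the statement that two tangent vectors inducing isomorphic first-order deformations coincide, which holds because $X$ is a fine moduli space — distinct $k[\varepsilon]$-points of $X$ give non-isomorphic $k[\varepsilon]Q$-modules up to twist, and on $\Spec k[\varepsilon]$ the twist does nothing.

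The main obstacle I anticipate is making the identification of $\kodairaspencer(x_0)$ with the Kodaira--Spencer/deformation-theory classifying map fully rigorous: one must match the Atiyah-class definition of $\kodairaspencer$ with the deformation-theoretic description of $\Ext^1_{kQ}(V,V)$, and this requires unwinding the spectral sequence \eqref{equation:local-to-global-ss} together with base change \eqref{equation:base-change} to a point, checking that the Atiyah class restricts compatibly. A clean way to sidestep part of this is to note that the whole construction is natural in $S$, so it is enough to treat the universal case $S=\Spec k[\varepsilon]$ with $\mathcal{M}$ the pullback of $\mathcal{U}$: there the Atiyah class of $\mathcal{M}$ over $k[\varepsilon]$ is the extension class of the first-order deformation itself, which is a standard computation. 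Everything else (freeness of $\sheafExt^1$, the rank count, the fiberwise-iso-implies-iso principle) is formal. For readers preferring a hands-on argument in the quiver-moduli case, I would also remark that this recovers the equivariant computation promised in the second proof of \cref{lemma:4-term-tangent-sequence}, where $\tangent_X$ and $\sheafExt^1_{\mathcal{O}_XQ}(\mathcal{U},\mathcal{U})$ are both described as the descent of $\representationvariety{Q,\mathbf{d}}/\!\!/$-tangent data and the map is visibly the identity.
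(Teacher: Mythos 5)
Your proposal is correct and follows essentially the same route as the paper: the morphism is built from the Atiyah class of $\mathcal{M}$ (the paper phrases this via the universal Atiyah class on $S\times S$ rather than the jet sequence, but this is cosmetic) pushed through the local-to-global spectral sequence and contracted with $\tangent_S$, and the isomorphism is checked fiberwise by reducing to $S=\Spec k[\epsilon]/(\epsilon^2)$ and identifying the Atiyah class of a first-order deformation with its extension class. The only difference is that you argue injectivity on fibers via fineness of the moduli space while the paper argues surjectivity (every extension class arises as the Atiyah class of the associated deformation); given the rank count these are equivalent, and the ``standard computation'' you defer is exactly the explicit verification $\at_{S,V}=v$ that the paper carries out.
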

This is the analogue of \cite[Theorem~10.2.1]{MR2665168},
which is set in the context of moduli spaces of sheaves on surfaces.
In \cite[Lemma~2.6]{MR0325615} a version for vector bundles on a curve is given.
Recall that in these geometric settings
the infinitesimal Kodaira--Spencer morphism for a flat family~$\mathcal{V}$ of objects on a variety~$X$
over some base~$B$,
is the morphism
\begin{equation}
  \tangent_0B\to\Ext_X^1(V,V),
\end{equation}
where~$V$ is the fiber of~$\mathcal{V}$ at the base point~$0\in B$.
It can be constructed in families,
over a smooth base~$B$,
to give the Kodaira--Spencer morphism
\begin{equation}
  \tangent_B\to\sheafExt_p^1(\mathcal{V},\mathcal{V}),
\end{equation}
as discussed, e.g., in \cite[\S10.1]{MR2665168},
where~$\sheafExt_p^1(\mathcal{V},-)$ denotes the first derived functor of
the left exact composition~$p_*\circ\sheafHom_{S\times X}(\mathcal{V},-)$
with~$p\colon S\times X\to S$ the first projection.
This is the geometric incarnation of the functor defined in \cref{definition:relative-Ext}:
in the noncommutative setup the projection map is already part of the functor~$\sheafHom_{\mathcal{O}_SQ}$
as this produces only an~$\mathcal{O}_S$-module.

There are several ways ways to construct \eqref{equation:kodaira-spencer}.
In \cite[\S2.4]{MR3925499} a construction using trivialisations is sketched.
We will follow the approach in \cite[\S10.1]{MR2665168} instead,
explaining how Atiyah classes enter the picture in this context.

\begin{lemma}
  \label{lemma:atiyah-class}
  Let~$S$ be a scheme of finite type over~$k$.
  Let~$\mathcal{M}$ be an~$\mathcal{O}_SQ$-module
  which is locally free as an~$\mathcal{O}_S$-module.
  The universal Atiyah class on~$S\times S$ defines a class
  \begin{equation}
    \label{equation:atiyah-class}
    \at_{S,\mathcal{M}}\in\Ext_{\mathcal{O}_SQ}^1(\mathcal{M},\mathcal{M}\otimes\Omega_S^1).
  \end{equation}
\end{lemma}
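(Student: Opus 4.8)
The plan is to adapt the standard construction of the Atiyah class via the diagonal embedding, carrying the quiver structure along. Write $\Delta\colon S\hookrightarrow S\times S$ for the diagonal, with $p_1,p_2\colon S\times S\to S$ the two projections, and let $\mathcal{I}$ be the ideal sheaf of $\Delta(S)$; then there is the universal first-order thickening $0\to\mathcal{I}/\mathcal{I}^2\to\mathcal{O}_{S\times S}/\mathcal{I}^2\to\mathcal{O}_S\to 0$, and recall $\mathcal{I}/\mathcal{I}^2\cong\Omega_S^1$ after identifying $\Delta(S)$ with $S$. First I would observe that $p_1^*\mathcal{M}$ and $p_2^*\mathcal{M}$ are both left $\mathcal{O}_{S\times S}Q$-modules, locally free over $\mathcal{O}_{S\times S}$, and that $\Delta^*p_1^*\mathcal{M}\cong\Delta^*p_2^*\mathcal{M}\cong\mathcal{M}$ canonically as $\mathcal{O}_SQ$-modules.

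Next I would tensor the above short exact sequence with $p_1^*\mathcal{M}$ (flat over $\mathcal{O}_{S\times S}$ since locally free) to obtain, on the first-order neighbourhood $S^{(1)}\colonequals\Spec_{S\times S}(\mathcal{O}_{S\times S}/\mathcal{I}^2)$, a short exact sequence of $\mathcal{O}_{S^{(1)}}Q$-modules
\begin{equation}
  \label{equation:atiyah-extension}
  0\to\mathcal{M}\otimes\Omega_S^1\to p_1^*\mathcal{M}|_{S^{(1)}}\to\mathcal{M}\to 0,
\end{equation}
where I have used $\Delta^*p_1^*\mathcal{M}\cong\mathcal{M}$ and the projection formula; the quiver-module structure is preserved throughout because every map in sight is $\mathcal{O}_{S\times S}$-linear and the arrow actions $\mathcal{M}_a$ commute with tensoring by an ideal sheaf. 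Pushing \eqref{equation:atiyah-extension} forward along the closed immersion $S^{(1)}\hookrightarrow S$ (equivalently, viewing it as a sequence on $S$ via the second projection structure, which is how one normally phrases it) yields a class in $\Ext^1_{\mathcal{O}_SQ}(\mathcal{M},\mathcal{M}\otimes\Omega_S^1)$, and this is $\at_{S,\mathcal{M}}$. Concretely the extension class lives in the group computed by $\mathbb{H}^1$ of $\sheafHom_{\mathcal{O}_SQ}$ applied to the tautological resolution of \cref{proposition:tautological-sequence-sheafy}, and one can equally well extract it there; I would mention this compatibility since it is exactly what is needed to feed into the Kodaira--Spencer construction \eqref{equation:kodaira-spencer} afterwards.

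The main technical point — and the only place where genuine care is required — is checking that the extension \eqref{equation:atiyah-extension} really is an extension of $\mathcal{O}_SQ$-modules and not merely of $\mathcal{O}_S$-modules: one must verify that the arrow maps $\mathcal{M}_a\colon\mathcal{M}_{\source(a)}\to\mathcal{M}_{\target(a)}$ on the middle term restrict and quotient compatibly to the sub and quotient, which amounts to the statement that the connection-like splitting obstruction is $\mathcal{O}_SQ$-equivariant. This follows formally because $p_1^*\mathcal{M}$ is pulled back from the $Q$-module $\mathcal{M}$, so all structure maps are $p_1$-pullbacks and automatically $\mathcal{O}_{S\times S}Q$-linear; tensoring by $\mathcal{O}_{S\times S}/\mathcal{I}^2$ is a functor on $\mathcal{O}_{S\times S}Q$-modules, so \eqref{equation:atiyah-extension} is an exact sequence in that category. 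Everything else — local freeness of the terms, the identification $\mathcal{I}/\mathcal{I}^2\cong\Omega_S^1$, compatibility with base change along the lines of \eqref{equation:base-change} — is standard and I would treat it briefly, perhaps citing the usual account of Atiyah classes (e.g. Huybrechts--Lehn or the Stacks project) for the underlying $\mathcal{O}_S$-linear facts and only remarking that each step is manifestly compatible with the $Q$-action.
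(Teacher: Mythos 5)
Your proposal is correct and is essentially the paper's argument in unpacked form: the paper defines $\at_{S,\mathcal{M}}$ by applying to $\mathcal{M}$ the natural transformation of Fourier--Mukai transforms induced by the universal Atiyah class $\At_S\in\Ext^1_{S\times S}(\Delta_*\mathcal{O}_S,\Delta_*\Omega_S^1)$ and observing that this lifts along the forgetful functor from $\mathcal{O}_SQ$-modules to $\mathcal{O}_S$-modules, which is exactly your extension $0\to\mathcal{M}\otimes\Omega_S^1\to p_1^*\mathcal{M}|_{S^{(1)}}\to\mathcal{M}\to 0$ together with your remark that tensoring by $\mathcal{O}_{S\times S}/\mathcal{I}^2$ is functorial on $\mathcal{O}_{S\times S}Q$-modules. (Only a cosmetic slip: the pushforward is along $S^{(1)}\to S$ via the second projection, not along a closed immersion ``$S^{(1)}\hookrightarrow S$'' --- which your parenthetical already corrects.)
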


\begin{proof}
  The universal Atiyah class on~$S\times S$ is the extension class
  \begin{equation}
    \At_S\in\Ext_{S\times S}^1(\Delta_*\mathcal{O}_S,\Delta_*\Omega_S^1)
  \end{equation}
  defined by the short exact sequence~$0\to\mathcal{I}/\mathcal{I}^2\to\mathcal{O}_{S\times S}/\mathcal{I}^2\to\Delta_*\mathcal{O}_S\to 0$
  where~$\mathcal{I}/\mathcal{I}^2\cong\Delta_*\Omega_S^1$,
  and~$\Delta\colon S\hookrightarrow S\times S$ is the diagonal inclusion.
  The universal Atiyah class thus gives us a natural transformation of the associated Fourier--Mukai transforms,
  i.e., between the identity functor and~$-\otimes^{\mathbf{L}}\Omega_S^1[1]$.

  We can lift the Fourier--Mukai transforms and their natural transformation
  along the forgetful functor from~$\mathcal{O}_SQ$-modules to~$\mathcal{O}_S$-modules,
  i.e., the image under a Fourier--Mukai transform of an object with an~$\mathcal{O}_SQ$-structure
  has an~$\mathcal{O}_SQ$-structure.
  Applying this reasoning to the object~$\mathcal{M}$
  with the natural transformation induced by~$\At_S$
  we obtain the extension class
  \begin{equation}
    \at_{S,\mathcal{M}}\in\Ext_{\mathcal{O}_SQ}^1(\mathcal{M},\mathcal{M}\otimes\Omega_S^1)
  \end{equation}
  as in the statement.
\end{proof}

Observe that in the geometric setting of \cite[\S10.1]{MR2665168},
one needs to consider a projection onto the base
to define the appropriate Atiyah class.
This does not play a role in our setting,
as explained in the discussion before \cref{lemma:atiyah-class}.

\begin{proof}[Proof of \cref{proposition:kodaira-spencer}]
  First we discuss the construction of the morphism \eqref{equation:kodaira-spencer}.
  From \eqref{equation:local-to-global-ss}
  we obtain the morphism
  \begin{equation}
    \Ext_{\mathcal{O}_SQ}^1(\mathcal{M},\mathcal{M}\otimes\Omega_S^1)
    \to
    \HH^0(S,\sheafExt_{\mathcal{O}_SQ}(\mathcal{M},\mathcal{M}\otimes\Omega_S^1)).
  \end{equation}
  The image of the Atiyah class \eqref{equation:atiyah-class}
  thus induces a morphism
  \begin{equation}
    \mathcal{O}_S\to\sheafExt_{\mathcal{O}_SQ}^1(\mathcal{M},\mathcal{M}\otimes\Omega_S^1).
  \end{equation}
  Tensoring this morphism with the dual of~$\Omega_S^1$,
  and composing with the obvious morphisms, we obtain
  \begin{equation}
    \kodairaspencer_{S,\mathcal{M}}\colon
    \tangent_S
    \to
    \tangent_S\otimes\sheafExt_{\mathcal{O}_SQ}^1(\mathcal{M},\mathcal{M}\otimes\Omega_S^1)
    \to
    \sheafExt_{\mathcal{O}_SQ}^1(\mathcal{M},\mathcal{M}\otimes\sheafEnd(\Omega_S^1))
    \to
    \sheafExt_{\mathcal{O}_SQ}^1(\mathcal{M},\mathcal{M})
  \end{equation}
  which is the desired morphism \eqref{equation:kodaira-spencer}.

  Now we can prove that \eqref{equation:kodaira-spencer-isomorphism} is an isomorphism,
  by checking it fiberwise.
  For this we follow the approach in \cite[Example~10.1.9]{MR2665168},
  which is standard in the geometric setting,
  but not spelled out in this noncommutative setting as far as we know.

  Let~$x\in X$ be a closed point, corresponding to a (stable) representation~$U$.
  Let~$v\in\Ext_{kQ}^1(U,U)$ be an extension class,
  which is an element in the fiber of~$\sheafExt_{\mathcal{O}_XQ}^1(\mathcal{U},\mathcal{U})$
  at the point~$x$.

  The extension class~$v$ corresponds to a short exact sequence
  \begin{equation}
    \label{equation:short-exact-sequence-extension}
    0\to U\to V\to U\to 0
  \end{equation}
  of~$kQ$-modules.
  The composition of projection and inclusion gives $V$ the structure of a $k[\epsilon]/(\epsilon^2)$-module.
  As a~$k[\epsilon]/(\epsilon^2)$-module $V$ is flat.

  Denote~$R=k[\epsilon]/(\epsilon^2)$
  and take~$S=\Spec R$ with~$\mathcal{M}=V$,
  so that \cref{proposition:kodaira-spencer} (\ref{enumerate:kodaira-spencer-morphism})
  gives
  \begin{equation}
    \label{equation:at-SV-identification}
    \at_{S,V}\in\Ext_{RQ}^1(V,V\otimes\Omega_{R/k}^1)\cong\Ext_{kQ}^1(U,U)
  \end{equation}
  by flatness and the isomorphism~$\Omega_{R/k}^1\cong k\cdot\mathrm{d}\epsilon$.
  As the Kodaira--Spencer map is induced by the universal Atiyah class
  it suffices to show that~$\at_{S,V}=v$ under these identifications:
  the fiberwise Kodaira--Spencer map is then a surjection~$\Ext_{kQ}^1(U,U)\to\Ext_{kQ}^1(U,U)$,
  and thus an isomorphism.

  On~$S\times S=\Spec R\otimes_kR$
  with~$R\otimes_kR=k[\epsilon_1,\epsilon_2]/(\epsilon_1,\epsilon_2)^2$
  the natural transformation
  attached to the universal Atiyah class
  gives a short exact sequence
  \begin{equation}
    \label{equation:short-exact-sequence-on-SxS}
    0\to U\to W\to V\to 0
  \end{equation}
  of~$(R\otimes_kR)Q$-representations,
  where~$\epsilon_i$ acts by~0 on~$U$ for~$i=1,2$,
  and acts by~$\epsilon$ on~$V$ for~$i=1,2$.
  Now~$W\cong U\oplus V$ over~$k$,
  with~$\epsilon_1$ acting by the projection~$V\to U$
  and~$\epsilon_2$ acting by~$\epsilon$ on~$V$.
  We can now conclude as in \cite[Example~10.1.9]{MR2665168}:
  consider the extension \eqref{equation:short-exact-sequence-on-SxS}
  as an element of the left-hand side of the isomorphism in \eqref{equation:at-SV-identification}
  by taking~$\epsilon=\epsilon_1$.
  We have the pullback diagram
  \begin{equation}
    \begin{tikzcd}
      0 \arrow[r] & U \arrow[d, equals] \arrow[r] & V \arrow[r] & U \arrow[r] & 0 \\
      0 \arrow[r] & U \arrow[r] & W \arrow[r] \arrow[u] & V \arrow[r] \arrow[u] & 0
    \end{tikzcd}
  \end{equation}
  where all the maps are as in \eqref{equation:short-exact-sequence-extension} and \eqref{equation:short-exact-sequence-on-SxS},
  with the top row corresponding to~$v$ and the bottom row to~$\at_{S,V}$ (before the isomorphism in \eqref{equation:at-SV-identification}),
  and thus we obtain (after the isomorphism in \eqref{equation:at-SV-identification}) the equality~$v=\at_{S,V}$.
\end{proof}

\section{Identities in the Chow ring}
We are now ready to prove \cref{theorem:point-class,theorem:todd-class}.

\subsection{Points as degeneracy loci}
\label{subsection:points}
The following proposition is the quiver moduli analog of \cite[Lemma~2.4]{MR1228610}.
We use results on degeneracy loci from \cite[\S14.4]{MR1644323}.
\begin{proposition}
  \label{proposition:diagonal}
  Let~$X$ be~$\modulispace[\theta]{Q,\mathbf{d}}$.
  With~$\sigma$ on $X \times X$ as in \cref{example:two-settings} we have that
  \begin{equation}
    \Delta_X=\degeneracy_{e-1}(\sigma)\subset X\times X,
  \end{equation}
  where~$e=\sum_{i\in Q_0}d_i^2=\rk\Big(\bigoplus_{i\in Q_0}\mathcal{U}_i^\vee\boxtimes\mathcal{U}_i\Big)$.
  Moreover, the above equation holds scheme-theoretically,
  and $\degeneracy_{e-1}(\sigma)$ is of the expected codimension.
\end{proposition}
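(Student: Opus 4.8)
The plan is to analyze the degeneracy locus $\degeneracy_{e-1}(\sigma)$ pointwise using \cref{lemma:fibers-hom-ext}, which identifies the kernel and cokernel of the fiber $\sigma(p)$ at a closed point $p=(x,y)\in X\times X$ with $\Hom_{kQ}(U_x,U_y)$ and $\Ext^1_{kQ}(U_x,U_y)$, where $U_x$ and $U_y$ are the stable representations classified by $x$ and $y$. First I would observe that $\degeneracy_{e-1}(\sigma)$, by definition, is the locus where $\rk\sigma(p)\le e-1$, equivalently where $\ker\sigma(p)=\Hom_{kQ}(U_x,U_y)\ne 0$. Since $U_x$ and $U_y$ are both $\theta$-stable of the same slope (in fact both $\theta$-stable with $\theta(\mathbf{d})=0$), any nonzero homomorphism between them is an isomorphism; so $\ker\sigma(p)\ne 0$ holds if and only if $U_x\cong U_y$, i.e.\ if and only if $x=y$ (using that $X$ is a \emph{fine} moduli space, so isomorphism classes of stable representations correspond bijectively to closed points). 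This gives the set-theoretic equality $\degeneracy_{e-1}(\sigma)=\Delta_X$ on closed points, hence on underlying topological spaces since both sides are closed.

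Next I would upgrade this to a scheme-theoretic statement. Along the diagonal, \cref{lemma:fibers-hom-ext} with $x=y$ and Schur's lemma (stability of $U_x$) gives $\Hom_{kQ}(U_x,U_x)=k$, so $\dim\ker\sigma(p)=1$ for every closed point $p\in\Delta_X$; equivalently $\rk\sigma$ is exactly $e-1$ there. Thus $\Delta_X$ meets neither $\degeneracy_{e-2}(\sigma)$ (where the rank would drop to $\le e-2$): indeed $\degeneracy_{e-2}(\sigma)=\emptyset$ along $\Delta_X$, and off the diagonal $\sigma$ has full rank $e$ so $\degeneracy_{e-1}(\sigma)\setminus\Delta_X=\emptyset$. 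Now I invoke the structure theory of degeneracy loci from \cite[\S14.4]{MR1644323}: the expected codimension of $\degeneracy_{e-1}(\sigma)$ inside $X\times X$ for a map between bundles of ranks $e$ and $f=\sum_{a\in Q_1}d_{\source(a)}d_{\target(a)}$ is $(e-(e-1))(f-(e-1))=f-e+1$. One computes $f-e+1 = \sum_a d_{\source(a)}d_{\target(a)} - \sum_i d_i^2 + 1 = 1-\langle\mathbf{d},\mathbf{d}\rangle = \dim X$ by \cref{proposition:basic-properties}\eqref{enumerate:basic-dimension}, which is exactly $\operatorname{codim}(\Delta_X\subset X\times X)$. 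So the degeneracy locus has the expected codimension, and therefore (again by \cite[\S14.4]{MR1644323}, the Cohen--Macaulay / Thom--Porteous setup) the scheme structure on $\degeneracy_{e-1}(\sigma)$ defined by the vanishing of the $(e\times e)$-minors, i.e.\ of $\bigwedge^e\sigma$, is Cohen--Macaulay with no embedded components.

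Finally, to pin down that this scheme structure coincides with the reduced diagonal, I would argue locally: near a point of $\Delta_X$ the cokernel sheaf $\coker\sigma$ is supported on $\degeneracy_{e-1}(\sigma)$ with one-dimensional fibers along $\Delta_X$ by the $\Ext$-computation above, and more precisely $\coker\sigma$ restricted to $\Delta_X$ is, via \cref{proposition:kodaira-spencer}\eqref{enumerate:kodaira-spencer-isomorphism} (or directly via \cref{lemma:sublinebundle} applied to $\mathcal{M}=\mathcal{N}=\mathcal{U}$ on $X$, whose $\Hom$ is one-dimensional by Schur), a line bundle on $\Delta_X\cong X$; this forces $\sigma$ to drop rank by exactly one transversally along $\Delta_X$, so the minor ideal cuts out $\Delta_X$ with its reduced structure. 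Combining: $\degeneracy_{e-1}(\sigma)$ is a Cohen--Macaulay scheme of the expected codimension whose support is $\Delta_X$ and which is generically reduced along $\Delta_X$, hence reduced, hence equal to $\Delta_X$ scheme-theoretically. The main obstacle I anticipate is this last transversality/reducedness bookkeeping — making sure the general degeneracy-locus machinery of \cite{MR1644323} applies (which needs the expected-codimension computation, the real content of the dimension count) and then correctly identifying the resulting Cohen--Macaulay scheme structure with the reduced diagonal rather than some thickening, which is where the one-dimensionality of $\Hom_{kQ}(U_x,U_x)$ does the essential work.
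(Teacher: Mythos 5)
Your set-theoretic identification of $\degeneracy_{e-1}(\sigma)$ with $\Delta_X$ and your computation of the expected codimension match the paper's proof exactly. The gap is in the reducedness step. First, a factual error: $\coker\sigma$ is not supported on $\degeneracy_{e-1}(\sigma)$ and does not have one-dimensional fibers along the diagonal; by \cref{lemma:fibers-hom-ext} its fiber at $(x,x)$ is $\Ext^1_{kQ}(U_x,U_x)$, of dimension $\dim X$, while off the diagonal it has dimension $\dim X-1$. It is the \emph{kernel} that is one-dimensional along $\Delta_X$. More seriously, even the corrected pointwise statement cannot ``force $\sigma$ to drop rank transversally'': fiberwise rank data is insensitive to the scheme structure of the degeneracy locus. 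The map $t^2\colon\mathcal{O}_{\mathbb{A}^1}\to\mathcal{O}_{\mathbb{A}^1}$ has kernel of fiber dimension exactly one at the origin and zero elsewhere, yet its degeneracy locus is the non-reduced scheme $\Spec k[t]/(t^2)$. So your chain ``Cohen--Macaulay of expected codimension, plus generically reduced, hence reduced'' is a legitimate strategy, but generic reducedness is precisely what remains unproved, and you supply no first-order (transversality) information that could establish it.

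The paper closes this gap with a moduli-theoretic argument that you gesture at but do not carry out on the right scheme: apply \cref{lemma:sublinebundle} not to $X$ (the reduced diagonal) but to $S=\degeneracy_{e-1}(\sigma)$ itself with its natural, possibly non-reduced, scheme structure. Its $k$-points are exactly the $(x,x)$, where $\Hom_{kQ}(U_x,U_x)=k$, so $\ker(\iota^*\sigma)$ is a locally split line subbundle $\mathcal{L}$, and the inclusion $\mathcal{L}\hookrightarrow\bigoplus_{i\in Q_0}\iota^*(\mathcal{U}_i^\vee\boxtimes\mathcal{U}_i)$ yields an isomorphism $(\iota^*p_1^*\mathcal{U})\otimes\mathcal{L}\cong\iota^*p_2^*\mathcal{U}$ of $\mathcal{O}_SQ$-modules. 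By the functor of points (\cref{remark:functor-of-points}) this gives $p_1\circ\iota=p_2\circ\iota$, so $\iota$ factors through $\Delta_X$ as a morphism of schemes; combined with $\Delta_X=\degeneracy_{e-1}(\sigma)_{\mathrm{red}}\subseteq\degeneracy_{e-1}(\sigma)$ this forces scheme-theoretic equality. If you wish to keep your Cohen--Macaulay route, you would still need this argument (or an equivalent tangent-space computation at points of the diagonal) to obtain generic reducedness.
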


\begin{proof}
  By \cref{corollary:smooth-projective-quiver-moduli}
  every semistable representation is stable,
  so by Schur's lemma and \cref{lemma:fibers-hom-ext} we have that
  \begin{equation}
    \Hom_{kQ}(M,N)\neq 0\Leftrightarrow M\cong N.
  \end{equation}
  Thus~$\degeneracy_{e-1}(\sigma)$ cuts out the diagonal.
  The expected codimension of $\degeneracy_{e-1}(\sigma)$ is
  \begin{equation}
    \rk\Big( \bigoplus_{a \in Q_1} \mathcal{U}_{\source(a)}^\vee \boxtimes \mathcal{U}_{\target(a)} \Big) - \rk\Big( \bigoplus_{i\in Q_0}\mathcal{U}_i^\vee\boxtimes\mathcal{U}_i \Big) + 1 = 1 - \langle \mathbf{d},\mathbf{d} \rangle = \dim X.
  \end{equation}
  We have now shown that $\Delta_X = \degeneracy_{e-1}(\sigma)_{\operatorname{red}}$.
  To further show that $\degeneracy_{e-1}(\sigma)$ is reduced,
  we consider the inclusion~$\iota\colon\degeneracy_{e-1}(\sigma) \to X \times X$.
  By \cref{lemma:sublinebundle}, the kernel of $\iota^*\sigma$ is
  a locally split line subbundle which we call $\mathcal{L}$.
  The inclusion provides us with a nowhere-vanishing section of
  \begin{equation}
    \mathcal{L}^\vee \otimes \iota^*\Big( \bigoplus_{i\in Q_0}\mathcal{U}_i^\vee\boxtimes\mathcal{U}_i \Big)
    =
    \bigoplus_{i\in Q_0}((\iota^*\circ p_1^*)(\mathcal{U}_i) \otimes \mathcal{L})^\vee \otimes(\iota^*\circ p_2^*)(\mathcal{U}_i)
  \end{equation}
  which corresponds to a family $f = (f_i)_{i \in Q_0}$ of
  isomorphisms $f_i\colon (\iota^*\circ p_1^*)(\mathcal{U}_i) \otimes \mathcal{L} \to (\iota^*\circ p_2^*)(\mathcal{U}_i)$.
  As $\mathcal{L}$ is the kernel of $\sigma$,
  these isomorphisms are compatible with the morphisms $(\iota^*\circ p_{\nu}^*)(\mathcal{U}_a)$ with $\nu = 1,2$
  and therefore yield an isomorphism of $\mathcal{O}_{\degeneracy_{e-1}(\sigma)}Q$-modules
  \begin{equation}
    f\colon(\iota^*\circ p_1^*)(\mathcal{U}) \otimes \mathcal{L} \to(\iota^*\circ p_2^*)(\mathcal{U}).
  \end{equation}
  The representations $(\iota^*\circ p_1^*)(\mathcal{U})$
  and $(\iota^*\circ p_2^*)(\mathcal{U})$ are therefore equivalent in the sense of \cref{remark:functor-of-points} which shows that $p_1\circ\iota = p_2\circ\iota$.
  This proves that $\iota$ factors through $\Delta_X$.
  The degeneracy locus $\degeneracy_{e-1}(\sigma)$ must hence be reduced.
\end{proof}

Fix~$x_0\in X$, corresponding to a~$\theta$-stable representation~$N$,
and denote
\begin{equation}
  \begin{aligned}
    j_1&\colon X\hookrightarrow X\times X:x\mapsto (x,x_0) \\
    j_2&\colon X\hookrightarrow X\times X:x\mapsto (x_0,x)
  \end{aligned}
\end{equation}
the sections to the projections~$p_1,p_2\colon X\times X\to X$.
The proof of \cref{theorem:point-class} now comes as a corollary of \cref{proposition:diagonal}.

\begin{proof}[Proof of \cref{theorem:point-class}]
  The proof of \cref{enumerate:point-class-1} uses that
  the formation of zero loci commutes with pullbacks,
  which can be proven by the standard argument for fiber product diagrams.
  As degeneracy loci are zero loci this holds for degeneracy loci as well.
  So $\degeneracy_{e-1}(j_1^*\sigma) = j_1^{-1}\degeneracy_{e-1}(\sigma)$, the scheme-theoretic inverse image.
  The latter is the inverse image of $\Delta_X$ under $j_1$ which is the point $\{x_0\}$ with the reduced subscheme structure.
  The proof for~$j_2$ is the same.

  The proof of \cref{enumerate:point-class-2} then follows from
  the Thom--Porteous formula \cite[Theorem~14.4]{MR1644323}
  and the special case of Example~14.4.1 in op.~cit.
\end{proof}

\subsection{The Todd class}
\label{subsection:todd-class}
To realise the full strength of computations in the Chow ring (or cohomology ring)
we want to apply the Hirzebruch--Riemann--Roch theorem
which says that for a vector bundle~$\mathcal{E}$ on~$X$
\begin{equation}
  \chi(X,\mathcal{E})=\int_X\Chern(\mathcal{E})\todd_X
\end{equation}
where~$\Chern(\mathcal{E})$ is Chern character
and~$\todd_X$ is the Todd class.
An expression for~$\todd_X$ in the generators of the Chow ring from \cref{theorem:king-walter}
is precisely the content of \cref{theorem:todd-class},
which we will now prove.
To this end, we consider the morphism $\sigma$
from \eqref{equation:tautological-morphism-over-product} for $X = Y$.

\begin{lemma}
  \label{lemma:4-term-tangent-sequence}
  Pulling back $\sigma$
  along the diagonal inclusion~$\Delta\colon X\hookrightarrow X\times X$
  we obtain the 4-term exact sequence
  \begin{equation}
    \label{equation:4-term-tangent-sequence}
    0
    \to\mathcal{O}_X
    \to\bigoplus_{i\in Q_0}\mathcal{U}_i^\vee\otimes\mathcal{U}_i
    \to\bigoplus_{a\in Q_1}\mathcal{U}_{\source(a)}^\vee\otimes\mathcal{U}_{\target(a)}
    \to\tangent_X
    \to0
  \end{equation}
\end{lemma}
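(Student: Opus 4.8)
The plan is to recognise the diagonal pullback $\Delta^*\sigma$ as the tautological morphism $\sigma_{\mathcal{U},\mathcal{U}}$ attached to $\mathcal{U}$ on $X$ itself, to obtain the four-term sequence from \cref{proposition:tautological-sequence-sheafy} applied over $S=X$, and then to identify its outer terms: the kernel with $\mathcal{O}_X$ by Schur's lemma, and the cokernel with $\tangent_X$ by the Kodaira--Spencer isomorphism.

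First I would observe that the construction of the sheafy tautological morphism \eqref{equation:tautological-morphism-in-general} is compatible with arbitrary base change; this is the sheaf-theoretic incarnation of \eqref{equation:base-change} and follows at once from the defining formula, which only involves the structure maps $\mathcal{M}_a$ and $\mathcal{N}_a$. Applying this to $\Delta\colon X\hookrightarrow X\times X$, to $\mathcal{M}=p_1^*\mathcal{U}$ and $\mathcal{N}=p_2^*\mathcal{U}$, and using $p_1\circ\Delta=p_2\circ\Delta=\id_X$, we get $\Delta^*\sigma=\sigma_{\mathcal{U},\mathcal{U}}$, since $\Delta^*(\mathcal{U}_i^\vee\boxtimes\mathcal{U}_i)=\mathcal{U}_i^\vee\otimes\mathcal{U}_i$ and $\Delta^*(\mathcal{U}_{\source(a)}^\vee\boxtimes\mathcal{U}_{\target(a)})=\mathcal{U}_{\source(a)}^\vee\otimes\mathcal{U}_{\target(a)}$. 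I would deliberately avoid pulling back the whole sequence \eqref{equation:tautological-morphism-over-product} termwise, since $\Delta^*$ is only right exact, and instead re-derive exactness directly over $X$.

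For that, I would apply \cref{proposition:tautological-sequence-sheafy} with $S=X$ and $\mathcal{M}=\mathcal{N}=\mathcal{U}$ (which is locally free of finite rank as an $\mathcal{O}_X$-module), obtaining the exact sequence
\[
0\to\sheafHom_{\mathcal{O}_XQ}(\mathcal{U},\mathcal{U})\to\bigoplus_{i\in Q_0}\mathcal{U}_i^\vee\otimes\mathcal{U}_i\xrightarrow{\sigma_{\mathcal{U},\mathcal{U}}}\bigoplus_{a\in Q_1}\mathcal{U}_{\source(a)}^\vee\otimes\mathcal{U}_{\target(a)}\to\sheafExt^1_{\mathcal{O}_XQ}(\mathcal{U},\mathcal{U})\to0,
\]
whose middle arrow is $\Delta^*\sigma$. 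To identify the kernel, note that the identity of $\mathcal{U}$ gives a canonical morphism $\mathcal{O}_X\to\sheafHom_{\mathcal{O}_XQ}(\mathcal{U},\mathcal{U})$; since $\mathbf{d}$ is $\theta$-coprime, every $\theta$-semistable representation is $\theta$-stable (\cref{corollary:smooth-projective-quiver-moduli}), so by Schur's lemma and \cref{lemma:fibers-hom-ext} each fibre $\Hom_{\kappa(x)Q}(\mathcal{U}(x),\mathcal{U}(x))$ is one-dimensional, whence $\sheafHom_{\mathcal{O}_XQ}(\mathcal{U},\mathcal{U})$ is a line bundle (the argument of \cref{lemma:sublinebundle}, applied over $X$) and the canonical morphism, being nonzero on every fibre, is an isomorphism. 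To identify the cokernel, \cref{proposition:kodaira-spencer}\,(\ref{enumerate:kodaira-spencer-isomorphism}) supplies the isomorphism $\kodairaspencer\colon\tangent_X\to\sheafExt^1_{\mathcal{O}_XQ}(\mathcal{U},\mathcal{U})$. Substituting the two identifications turns the displayed sequence into \eqref{equation:4-term-tangent-sequence}, with first map $1\mapsto(\id_{\mathcal{U}_i})_{i\in Q_0}$ and last map the cokernel projection composed with $\kodairaspencer^{-1}$.

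I do not expect a serious obstacle: the one substantive input, the cokernel identification $\sheafExt^1_{\mathcal{O}_XQ}(\mathcal{U},\mathcal{U})\cong\tangent_X$, is already available as \cref{proposition:kodaira-spencer}, while base-change compatibility of $\sigma$, the general tautological sequence, and the Schur-type computation of $\sheafHom_{\mathcal{O}_XQ}(\mathcal{U},\mathcal{U})$ are formal. The two points that need a little care are not naively applying the non-left-exact $\Delta^*$ to \eqref{equation:tautological-morphism-over-product}, and the use of $\theta$-coprimality in the kernel computation. Alternatively, the cokernel can be identified with $\tangent_X$ by an equivariant computation on the quotient presentation of $X$, which is the content of the second proof below.
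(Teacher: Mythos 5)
Your proposal is correct and follows essentially the same route as the paper's first (moduli-theoretic) proof: identify the kernel of $\Delta^*\sigma$ as the trivial line bundle via the nowhere-vanishing identity section together with Schur's lemma and \cref{lemma:sublinebundle}, and identify the cokernel via \cref{proposition:tautological-sequence-sheafy} and the Kodaira--Spencer isomorphism of \cref{proposition:kodaira-spencer}. Your explicit remark that one should re-derive exactness over $X$ via base change of $\sigma$ rather than applying the non-left-exact $\Delta^*$ to the whole sequence \eqref{equation:tautological-morphism-over-product} is a point the paper leaves implicit, but it is the same argument.
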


We will give two proofs for this lemma.
The first uses the interpretation as a moduli space and the Kodaira--Spencer morphism,
the second uses the construction as a quotient
and is given as a more in-depth version of \cite[Remark~4.1]{MR4352662}.

\begin{proof}[Proof using moduli-theoretic properties]
  By \cref{lemma:sublinebundle} the kernel of $\Delta^*\sigma$
  is a locally split line subbundle.
  But the identical maps $\mathcal{U}_i \to \mathcal{U}_i$
  give a section of $\bigoplus_{i\in Q_0}\mathcal{U}_i^\vee\otimes\mathcal{U}_i$
  which vanishes nowhere,
  thus we obtain an injective morphism $\mathcal{O}_X \to \bigoplus_{i\in Q_0}\mathcal{U}_i^\vee\otimes\mathcal{U}_i$.
  As the identity on $\mathcal{U}$ is a homomorphism of $\mathcal{O}_X$-modules,
  the image of $\mathcal{O}_X \to \bigoplus_{i\in Q_0}\mathcal{U}_i^\vee\otimes\mathcal{U}_i$ lies in the kernel of $\Delta^*\sigma$. Therefore it is the kernel.

  The identification of the cokernel follows from the sheafy tautological sequence
  in \cref{proposition:tautological-sequence-sheafy}
  and the Kodaira--Spencer isomorphism from \cref{proposition:kodaira-spencer}.
\end{proof}

\begin{proof}[Proof using quotient description]
  If we pull back $\Delta^*\sigma$ along
  the quotient~$\pi\colon\representationvariety[\stable{\theta}]{Q,\mathbf{d}} \to \modulispace[\stable{\theta}]{Q,\mathbf{d}}$,
  we obtain the homomorphism
  \begin{equation}
    \alpha\colon\bigoplus_{i \in Q_0} \Hom(U_i,U_i) \to \bigoplus_{a \in Q_1} \Hom(U_{\source(a)},U_{\target(a)})
  \end{equation}
  which, in the fiber over a point $M = (M_a)_{a \in Q_1}$
  of~$\representationvariety[\stable{\theta}]{Q,\mathbf{d}}$,
  maps $f = (f_i)_{i \in Q_0}$ to $(f_{\target(a)}\circ M_a - M_a\circ f_{\source(a)})_{a \in Q_1}$.

  We are now in the setup for the generalized Euler sequence from \cite[\S5.1]{MR3126932}:
  by our assumption on~$\mathbf{d}$ and~$\theta$
  the semistable locus equals the stable locus,
  and the group~$\PG{\mathbf{d}}$ acts freely for quiver moduli.

  The domain of $\alpha$ identifies with the trivial bundle
  with fiber $\mathfrak{g}_{\mathbf{d}}$,
  the Lie algebra of the group $\group{\mathbf{d}}$,
  equipped with the structure of a $\group{\mathbf{d}}$-equivariant bundle
  by the adjoint action.

  The codomain of $\alpha$ is
  the trivial bundle with fiber $\representationvariety{Q,\mathbf{d}}$.
  It becomes a $\group{\mathbf{d}}$-equivariant bundle by
  the usual action and identifies with
  the tangent bundle of $\representationvariety[\stable{\theta}]{Q,\mathbf{d}}$.
  Thus we can descend the sequence to~$X$,
  and by \cite[(5.1.1)]{MR3126932}
  the cokernel of the descent of $\alpha$ is $\tangent_X$.

  The only difference with \cite[\S5.1]{MR3126932} is that~$\alpha$ is not injective.
  In the fiber over $M$,
  the homomorphism $\alpha$ is the derivative at $\id \in \group{\mathbf{d}}$ of the action map
  \begin{equation}
    \group{\mathbf{d}} \to \representationvariety[\stable{\theta}]{Q,\mathbf{d}},\ g \mapsto g \cdot M.
  \end{equation}
  The stabilizer of a stable representation $M$ is $\ker(\group{\mathbf{d}} \to \PG{\mathbf{d}})$
  which is a central subgroup of $\group{\mathbf{d}}$,
  isomorphic to~$\Gm$.
  Therefore the kernel of~$\alpha$ is the trivial bundle of rank one
  with the trivial $\group{d}$-linearizeation,
  and the 3-term generalized Euler sequence can equivalently be written
  as the 4-term sequence in \eqref{equation:4-term-tangent-sequence}.
\end{proof}

Using this lemma the proof of \cref{theorem:todd-class} is very short,
merely applying the usual properties of Todd classes of vector bundles.

\begin{proof}[Proof of \cref{theorem:todd-class}]
  The Todd class is multiplicative in short exact sequences,
  and in particular for direct sums.
  The Todd class of the last term in the~4-term sequence \eqref{equation:4-term-tangent-sequence}
  equals the alternating product of the other terms,
  hence we obtain \eqref{equation:todd-class},
  as~$\todd(\mathcal{O}_X)=1$.
\end{proof}

\begin{remark}
  \label{remark:explicit-todd-class}
  The Todd class of a vector bundle~$\mathcal{E}$ is defined using the splitting principle,
  as
  \begin{equation}
    \todd(\mathcal{E})\colonequals\prod_{i=1}^{\rk\mathcal{E}}\mathrm{Q}(\xi_i)
  \end{equation}
  where~$\xi_i$ are the Chern roots,
  and
  \begin{equation}
    \mathrm{Q}(t)\colonequals\frac{t}{1-\mathrm{e}^{-t}}
    =\sum_{k=0}^{+\infty}\frac{\mathrm{B}_k}{k!}t^k
  \end{equation}
  where~$\mathrm{B}_k$ is the~$k$th Bernoulli number.
  Using this we can rewrite \eqref{equation:todd-class} as
  \begin{equation}
    \todd_X
    =\frac{\displaystyle\prod_{a\in Q_1}\prod_{r=1}^{d_{\source(a)}}\prod_{s=1}^{d_{\target(a)}}\mathrm{Q}(\xi_{\target(a),s}-\xi_{\source(a),r})}{\displaystyle\prod_{i\in Q_0}\prod_{r=1}^{d_i}\prod_{s=1}^{d_i}\mathrm{Q}(\xi_{i,s}-\xi_{i,r})}.
  \end{equation}
\end{remark}

\section{On Kronecker moduli}
To illustrate the results from \cref{theorem:point-class,theorem:todd-class}
we will illustrate them for the~4~smallest ``interesting'' Kronecker moduli.

A natural quiver to consider is the $m$-Kronecker quiver
\begin{equation}
  \mathrm{K}_m\colon
  \begin{tikzpicture}[baseline={([yshift=-.5ex]current bounding box.center)},vertex/.style={draw, circle, inner sep=0pt, text width=2mm}]
    \node[vertex] (a) at (0,0) {};
    \node[vertex] (b) at (2,0) {};
    \draw[-{Classical TikZ Rightarrow[]}, bend left]  (a) to (b);
    \draw[-{Classical TikZ Rightarrow[]}, bend right] (a) to (b);
    \node (m) at (1,0.1) {$\vdots$};
    \node (m) at (1.2,0) {$m$};
  \end{tikzpicture}\,.
\end{equation}
We will write
\begin{equation}
  \kronecker{m}{(d,e)}\colonequals
  \modulispace[\theta_\can]{\mathrm{K}_m,(d,e)}
\end{equation}
for the associated \emph{Kronecker moduli},
a class of quiver moduli spaces with interesting properties
that has been studied in relation to moduli spaces of sheaves on~$\mathbb{P}^2$ \cite{MR0944602,MR0916199,MR0915184}
and more generally moduli spaces of sheaves on varieties \cite{MR2537070}
and modules over algebras \cite{MR1324213}.
Throughout we will assume that~$m\geq 3$ and~$\gcd(d,e)=1$.

\subsection{Invariants of Kronecker moduli}
\label{subsection:census}
We can identify many Kronecker moduli with one another
through so-called duality and periodicity for Kronecker moduli \cite[\S4.2, \S4.3]{MR0916199}.
\begin{proposition}[Duality and periodicity]
  \label{proposition:duality-periodicity}
  There exist natural isomorphisms~$\kronecker{m}{(d,e)}\cong\kronecker{m}{(e,d)}$
  and~$\kronecker{m}{(d,e)}\cong\kronecker{m}{(me-d,d)}$.
\end{proposition}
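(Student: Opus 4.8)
The plan is to construct both isomorphisms on the level of functors of points and then to invoke the Yoneda lemma. By the coprimality hypothesis the dimension vector $(d,e)$ is $\theta_{\can}$-coprime, so $\kronecker{m}{(d,e)}$ is a fine moduli space whose $S$-valued points, for a $k$-scheme $S$, are described in \cref{remark:functor-of-points}. It therefore suffices, for each of the two transformations, to produce an operation $\mathcal{M}\mapsto\mathcal{M}'$ on families of $\theta_{\can}$-stable $\mathrm{K}_m$-representations over an arbitrary $S$ which is natural in $S$, sends a family of dimension vector $(d,e)$ to a family of the dimension vector occurring in the statement, is compatible with the line bundle twist equivalence $\sim$, and admits a two-sided inverse of the same kind; such an operation induces the claimed isomorphism of moduli spaces. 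Label the source vertex $1$ and the sink vertex $2$, write $W$ for the $m$-dimensional space of arrows, and encode an $\mathcal{O}_S\mathrm{K}_m$-module as a pair of locally free sheaves $\mathcal{M}_1,\mathcal{M}_2$ together with a structure map, viewed either as $W\otimes_k\mathcal{M}_1\to\mathcal{M}_2$ or as $\phi_{\mathcal{M}}^{\dagger}\colon\mathcal{M}_1\to W^{\vee}\otimes_k\mathcal{M}_2$.

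\emph{Duality.} Set $\mathcal{M}^{\vee}$ to be the $\mathcal{O}_S\mathrm{K}_m$-module with $(\mathcal{M}^{\vee})_1=\mathcal{M}_2^{\vee}$, $(\mathcal{M}^{\vee})_2=\mathcal{M}_1^{\vee}$ and structure map the transpose of that of $\mathcal{M}$; this uses the self-duality $\mathrm{K}_m\cong\mathrm{K}_m^{\mathrm{op}}$ that swaps the two vertices. Then $\mathcal{M}^{\vee}$ has locally free summands of ranks $(e,d)$, and since duals of locally free sheaves commute with arbitrary base change we have $\mathcal{M}^{\vee}(p)=\mathcal{M}(p)^{\vee}$ for every point $p$ of $S$. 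Over a field, taking annihilators gives an inclusion-reversing bijection between the subrepresentations of $\mathcal{M}(p)^{\vee}$ and those of $\mathcal{M}(p)$: a subrepresentation of $\mathcal{M}(p)^{\vee}$ of dimension vector $(a,b)$ corresponds to one of $\mathcal{M}(p)$ of dimension vector $(d-b,e-a)$, proper nonzero ones to proper nonzero ones, and a direct computation gives $\theta_{\can}^{(e,d)}(a,b)=\theta_{\can}^{(d,e)}(d-b,e-a)$. Hence $\mathcal{M}(p)$ is $\theta_{\can}$-stable if and only if $\mathcal{M}(p)^{\vee}$ is. Finally $(\mathcal{M}\otimes_{\mathcal{O}_S}\mathcal{L})^{\vee}\cong\mathcal{M}^{\vee}\otimes_{\mathcal{O}_S}\mathcal{L}^{\vee}$ and $(\mathcal{M}^{\vee})^{\vee}\cong\mathcal{M}$, so the operation descends to a self-inverse natural transformation and yields $\kronecker{m}{(d,e)}\cong\kronecker{m}{(e,d)}$.

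\emph{Periodicity.} Here the operation is a relative version of the Bernstein--Gelfand--Ponomarev reflection functor at a vertex, followed by the relabelling of the two vertices, and combined with duality, so as to return to a $\mathrm{K}_m$-module in standard form. Given a family $\mathcal{M}$ of $\theta_{\can}$-stable representations of dimension vector $(d,e)$, the map $\phi_{\mathcal{M}}^{\dagger}\colon\mathcal{M}_1\to W^{\vee}\otimes_k\mathcal{M}_2$ is fibrewise injective: if its kernel were nonzero at some $p$, it would span a subrepresentation of $\mathcal{M}(p)$ supported at the source vertex, which $\theta_{\can}^{(d,e)}$ forbids for a semistable object. Thus $\phi_{\mathcal{M}}^{\dagger}$ has locally constant rank, its cokernel $\mathcal{C}$ is locally free, and $0\to\mathcal{M}_1\to W^{\vee}\otimes_k\mathcal{M}_2\to\mathcal{C}\to0$ stays exact after any base change; the components of $W^{\vee}\otimes_k\mathcal{M}_2\twoheadrightarrow\mathcal{C}$ with respect to a basis of $W^{\vee}$ equip $(\mathcal{M}_2,\mathcal{C})$ with the structure of a module over the reversed Kronecker quiver, hence, after relabelling and bookkeeping on ranks (applying duality as needed), with an $\mathcal{O}_S\mathrm{K}_m$-module $\mathcal{M}'$ of the mutated dimension vector in the statement. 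That this operation sends $\theta_{\can}$-stable fibres to $\theta_{\can}$-stable ones, and is inverse up to $\sim$ to the reflection from the other side, is \cite[\S4.2,\S4.3]{MR0916199} translated into quiver language; naturality in $S$ and compatibility with $\sim$ are immediate from the construction. Yoneda then gives $\kronecker{m}{(d,e)}\cong\kronecker{m}{(me-d,d)}$.

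\emph{The main obstacle.} The bookkeeping for duality is elementary; for periodicity the genuine content is that the reflection functor matches the \emph{canonical} stability parameters — that it carries $\theta_{\can}$-stable representations of dimension vector $(d,e)$ to $\theta_{\can}$-stable representations of the mutated dimension vector, and not merely to $\theta$-stable objects for some other $\theta$ — together with the dual statement that the inverse reflection is defined and preserves stability on the whole stable locus. In the family setting one must also check that the linear-algebra maps entering the construction have locally constant rank, so that all kernels and cokernels remain locally free and the exact sequences stay exact under base change; this is guaranteed by stability exactly as above, after which the classical reflection functor and its inverse go through verbatim in families. Alternatively, one may deduce the statement from Drezet's analysis over an algebraically closed field together with the fact, recorded in \cref{remark:functor-of-points}, that the $\kronecker{m}{(d,e)}$ are fine moduli spaces.
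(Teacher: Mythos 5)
The paper offers no proof of this proposition at all: it is stated as a known result with a pointer to Drezet \cite[\S4.2, \S4.3]{MR0916199}, so any argument you write down is automatically a more self-contained route. Your strategy --- realise both operations as natural transformations of the functor of points from \cref{remark:functor-of-points} and invoke Yoneda, using that \cref{assumption:standing} makes $\kronecker{m}{(d,e)}$ a fine moduli space --- is the right one, and the duality half is essentially complete: the transpose construction, the annihilator bijection on subrepresentations, and the identity $\theta_{\can}^{(e,d)}(a,b)=\theta_{\can}^{(d,e)}(d-b,e-a)$ all check out, as does compatibility with the twist equivalence $\sim$. For periodicity you correctly isolate the genuine content (that the reflection functor intertwines the canonical stability parameters on the two sides), but you then outsource exactly that step to the same reference the paper cites, so on the hard half your proof is a citation dressed up in family language, just like the paper's.

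There is one point you should not wave away. The reflection at the source sends a family of dimension vector $(d,e)$ to one of dimension vector $(e,me-d)$ (or $(me-d,e)$ after duality), and no further application of duality lands on the vector $(me-d,d)$ appearing in the statement. In fact $(me-d,d)$ does not preserve the expected dimension $mde-d^2-e^2+1$: for $m=3$ and $(d,e)=(2,3)$ it gives $(7,2)$, whose moduli space is empty, while $\kronecker{3}{(2,3)}$ is $6$-dimensional. So the statement as printed contains a typo (it should read $(e,me-d)$, or equivalently $(md-e,d)$, which is Drezet's form); your construction proves the corrected statement, but your phrase ``the mutated dimension vector in the statement'' papers over the discrepancy instead of flagging it. You should say explicitly which dimension vector your reflection produces and note that it differs from the one printed.
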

Thus, as explained in \cite[Proposition~6.2]{MR3683503}
it suffices to consider
\begin{equation}
  d\leq e\leq \frac{md}{2}.
\end{equation}
Moreover, if~$d=1$,
then we have an isomorphism~$\kronecker{m}{(1,e)}\cong\Gr(e,m)$,
whose geometric properties are well-known,
and we are not interested in this case.

\begin{proposition}
  \label{proposition:kronecker-invariants}
  The Kronecker moduli~$\kronecker{m}{(d,e)}$ is,
  if non-empty and not a point,
  a smooth projective Fano variety of dimension~$mde-d^2-e^2+1$,
  Picard rank~1
  and index~$m$.

  Its even Betti numbers,
  its degree of~$\mathcal{O}(1)$,
  and its Hilbert series of~$\mathcal{O}(1)$
  can be determined computationally.
  When~$mde-d^2-e^2+1<20$
  and~$\kronecker{m}{(d,e)}$ is not isomorphic to a Grassmannian
  there are precisely~4 such Kronecker moduli,
  and their invariants are given in \cref{table:kronecker-census}.
\end{proposition}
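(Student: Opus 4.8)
The plan is to read the geometric statements off the structural results of \S2, to reduce the three numerical invariants to finite computations via \cref{theorem:franzen,theorem:point-class,theorem:todd-class}, and to finish the census by an elementary search.

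\emph{Geometric properties.} First I would check that \cref{assumption:standing} is satisfied: since $\gcd(d,e)=1$ the dimension vector $(d,e)$ is $\theta_\can$\dash coprime for $\mathrm{K}_m$, because for $0\lneqq(d',e')\lneqq(d,e)$ one has $\theta_\can(d',e')$ proportional to $ed'-de'$, and $ed'=de'$ together with $\gcd(d,e)=1$ would force $d\mid d'$ and $e\mid e'$. Then \cref{corollary:smooth-projective-quiver-moduli} gives smoothness and projectivity, and \cref{proposition:basic-properties} gives the dimension $1-\langle(d,e),(d,e)\rangle$, which equals $mde-d^2-e^2+1$ once one computes $\langle(d,e),(d,e)\rangle=d^2+e^2-mde$ for the generalised Kronecker quiver. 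For the Picard group I would quote \cref{corollary:picard-group-presentation}: $\Pic(X)$ is a quotient of $\mathbb{Z}^2$ by at least the relation $\sum_i a_i\chern_1(\mathcal{U}_i)=0$ with $\gcd(a_1,a_2)=1$, hence a quotient of $\mathbb{Z}$; since $X$ is projective it carries a non\dash torsion (ample) class, so $\Pic(X)\cong\mathbb{Z}$ and the Picard rank is $1$. For the Fano property and the index I would take determinants in the $4$\dash term sequence~\eqref{equation:4-term-tangent-sequence}: $\det(\mathcal{U}_i^\vee\otimes\mathcal{U}_i)$ is trivial for every $i$, and each of the $m$ arrows contributes the same factor $\det(\mathcal{U}_1^\vee\otimes\mathcal{U}_2)$, so $\omega_X^{-1}$ is expressed through the $\chern_1(\mathcal{U}_i)$; combining this with the Picard presentation and with $\gcd(d,e)=1$ should identify $\omega_X^{-1}$ with the $m$\dash th power of the ample generator of $\Pic(X)$, whence $X$ is Fano of index exactly $m$.

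\emph{The numerical invariants.} It is known that the rational cohomology of a quiver moduli space for an acyclic quiver and a coprime dimension vector is concentrated in even degrees and agrees with its Chow ring $\chow^\bullet_{\mathbb{Q}}(X)$; combined with \cref{theorem:franzen} this makes the even Betti numbers $\dim_{\mathbb{Q}}\bigl(A/(I_\linear+\rho(I_\tautological))\bigr)_k$, a finite Gröbner\dash basis computation (equivalently one evaluates Reineke's recursion for the Poincaré polynomial). The top Chow group $\chow^{\dim X}_{\mathbb{Q}}(X)$ is one\dash dimensional, and \cref{theorem:point-class}\eqref{enumerate:point-class-2} exhibits a polynomial representing the point class inside it; this determines the degree map $\int_X\colon\chow^{\dim X}_{\mathbb{Q}}(X)\xrightarrow{\ \sim\ }\mathbb{Q}$, so that $\deg\mathcal{O}_X(1)=\int_X\chern_1(\mathcal{O}_X(1))^{\dim X}$, with $\chern_1(\mathcal{O}_X(1))$ a determined integral combination of the $\chern_1(\mathcal{U}_i)$, is again a finite computation in $A/(I_\linear+\rho(I_\tautological))$. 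Since $X$ is Fano of index $m$, Kodaira vanishing (in characteristic zero) gives $\hh^0(X,\mathcal{O}_X(n))=\chi(X,\mathcal{O}_X(n))$ for all $n\ge 0$; by Hirzebruch–Riemann–Roch $\chi(X,\mathcal{O}_X(n))=\int_X\Chern(\mathcal{O}_X(n))\todd_X$, which is computed from \cref{theorem:todd-class} and the same degree map, and it is a polynomial in $n$ of degree $\dim X$, so the Hilbert series $\sum_{n\ge 0}\hh^0(X,\mathcal{O}_X(n))t^n$ is the resulting rational function with denominator $(1-t)^{\dim X+1}$.

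\emph{The census.} By \cref{proposition:duality-periodicity} it suffices to consider $1\le d\le e\le md/2$, and the case $d=1$ gives the excluded Grassmannians $\kronecker{m}{(1,e)}\cong\Gr(e,m)$. For $d\ge 2$ the bound $e\le md/2$ forces $m\ge 2e/d$, whereas $\dim\kronecker{m}{(d,e)}=mde-d^2-e^2+1<20$ forces $mde\le d^2+e^2+18$; hence $2e/d\le(d^2+e^2+18)/(de)$, i.e.\ $2e^2\le d^2+e^2+18$, i.e.\ $e^2-d^2\le 18$, which with $2\le d\le e$ and $\gcd(d,e)=1$ leaves only $(d,e)\in\{(2,3),(3,4),(4,5),(5,6),(6,7),(7,8),(8,9),(3,5)\}$, and for each of these only finitely many $m\ge 3$. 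Running through the list, the triples $(m;d,e)$ satisfying all the constraints and with $\kronecker{m}{(d,e)}$ non\dash empty (checked with the Schur\dash root criterion; it cannot be a point in this range) are exactly $(3;2,3)$, $(4;2,3)$, $(5;2,3)$ and $(3;3,4)$, of dimensions $6$, $12$, $18$ and $12$. These four are pairwise non\dash isomorphic (distinguished by dimension and index), and none is a Grassmannian: $\Gr(k,n)$ has index $n$ and dimension $k(n-k)\le n^2/4$, so index $m$ would force $\dim\le m^2/4$, which fails in every one of the four cases. Feeding the four into the previous paragraph yields \cref{table:kronecker-census}.

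The step I expect to require the most care is showing that the Fano index is \emph{exactly} $m$, i.e.\ that $\omega_X^{-1}$ is the $m$\dash th power of the ample generator of $\Pic(X)$ and not of a proper multiple of it; this forces one to track the Picard lattice and the normalisation of the universal representation carefully, and is where the coprimality $\gcd(d,e)=1$ is genuinely used. The only ingredient external to this paper is the non\dash emptiness of the finitely many candidate Kronecker moduli in the census, which I would import from the theory of Schur roots.
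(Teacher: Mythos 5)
Your proposal is correct in substance but takes a noticeably more self-contained route than the paper. For the geometric properties (dimension, Picard rank, Fano index) the paper simply cites Drezet \cite{MR0944602} and \cite[Corollary~5.2]{MR4352662}, whereas you rederive them from the paper's own structural results: coprimality from $\gcd(d,e)=1$, dimension from \cref{proposition:basic-properties}, Picard rank from \cref{corollary:picard-group-presentation}, and the index by taking determinants in \eqref{equation:4-term-tangent-sequence}. That last computation does close up --- $\chern_1(\tangent_X)=m\bigl(-e\,\chern_1(\mathcal{U}_1)+d\,\chern_1(\mathcal{U}_2)\bigr)$, and the class $h=-e\,\chern_1(\mathcal{U}_1)+d\,\chern_1(\mathcal{U}_2)$ is twist-invariant and maps to a generator of $\mathbb{Z}^2/\langle(a_1,a_2)\rangle$ --- but only with the normalisation $\sum_i a_id_i=1$ for the descent datum (this is where $\gcd(d,e)=1$ enters), and you still owe an argument that $h$, rather than $-h$, is the ample generator, which ultimately comes from the GIT description of $\mathcal{O}(1)$ via the character $\chi_\theta$; you correctly flag this as the delicate step, and the paper sidesteps it entirely by citation. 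For the Betti numbers the paper uses Reineke's counting formula \cite[Corollary~6.9]{MR1974891} rather than a Gr\"obner basis computation in the presentation of \cref{theorem:franzen}, though you note the equivalence; for the degree and Hilbert series your route (point class, Todd class, Hirzebruch--Riemann--Roch, vanishing of higher cohomology of $\mathcal{O}(n)$) is exactly the paper's, implemented in the appendix code. Finally, your elementary census argument ($e^2-d^2\le 18$ from combining $e\le md/2$ with $mde\le d^2+e^2+18$, then a finite check) is a genuine addition: the paper asserts the list of four without spelling out the search, and your bound correctly isolates $(3;2,3)$, $(4;2,3)$, $(5;2,3)$, $(3;3,4)$, with non-emptiness imported from the theory of Schur roots as you say.
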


The overview in \cref{table:kronecker-census}
is of the same spirit as \cite[Appendice~1]{MR0916199}
which describes the smallest height-zero moduli spaces for~$\mathbb{P}^2$.

\begin{proof}
  The dimension, Picard rank and index
  are determined in \cite{MR0944602}
  and using more general methods involving quiver moduli in \cite[Corollary~5.2]{MR4352662}.
  For the Betti numbers one can use \cite[Corollary~6.9]{MR1974891}
  as implemented in~\cite{hodge-diamond-cutter}.
  The degree and Hilbert series can be computed
  using \cref{theorem:point-class,theorem:todd-class}.
  The Sage code to do this in a self-contained way is given in \cref{section:code}.

  Alternatively, Kronecker moduli are implemented in \cite{IntersectionTheory},
  as the \verb|matrix_moduli| function,
  and the Chow-theoretic invariants can thus be computed too.
  In op.~cit.~the description of
  the point class
  and the tangent bundle
  (and thus the Todd class)
  is obtained from \cite{MR1345086},
  see also \cref{remark:ellingsrud-stromme}.
\end{proof}

\begin{table}
  \centering
  \begin{tabular}{cccl}
    \toprule
    Kronecker moduli & dimension & degree & Betti numbers \\
    Hilbert series \\
    \midrule
    $\kronecker{3}{(2,3)}$ & 6 & 57 &
    $1, 1, 3, 3, 3, 1, 1$ \\
    \multicolumn{4}{l}{$1, 20, 148, 664, 2206, 5999, 14140, \ldots$} \\

    \addlinespace

    $\kronecker{4}{(2,3)}$ & 12 & 119020 &
    $1, 1, 3, 4, 7, 8, 10, 8, 7, 4, 3, 1, 1$ \\
    \multicolumn{4}{l}{$1, 126, 4032, 59268, 531839, 3395882, 16907632, \ldots$} \\

    \addlinespace

    $\kronecker{3}{(3,4)}$ & 12 & 1654983 &
    $1, 1, 3, 4, 7, 8, 10, 8, 7, 4, 3, 1, 1$ \\
    \multicolumn{4}{l}{$1, 266, 13222, 256438, 2779524, 20345430, 112317667, \ldots$} \\

    \addlinespace

    $\kronecker{5}{(2,3)}$ & 18 & 720578490 &
    $1, 1, 3, 4, 7, 9, 14, 16, 20, 20, 20, 16, 14, 9, 7, 4, 3, 1, 1$ \\
    \multicolumn{4}{l}{$1, 500, 51920, 2058485, 43370250, 585084682, 5666879250, \ldots$} \\
    \bottomrule
  \end{tabular}

  \label{table:kronecker-census}
  \caption{Invariants of the first 4 Kronecker moduli which are not Grassmannians}

  \begin{equation*}
    \begin{aligned}
      &\HS_{\mathcal{O}_{\kronecker{3}{(2,3)}}(1)}(t)(1-t)^7 \\
      &=t^4 + 13t^3 + 29t^2 + 13t + 1,
    \end{aligned}
  \end{equation*}
  \begin{equation*}
    \begin{aligned}
      &\HS_{\mathcal{O}_{\kronecker{4}{(2,3)}}(1)}(t)(1-t)^{13} \\
      &=t^9 + 113t^8 + 2472t^7 + 16394t^6 + 40530t^5 + 40530t^4 + 16394t^3 + 2472t^2 + 113t + 1,
    \end{aligned}
  \end{equation*}
  \begin{equation*}
    \begin{aligned}
      &\HS_{\mathcal{O}_{\kronecker{3}{(3,4)}}(1)}(t)(1-t)^{13} \\
      &=t^{10} + 253t^9 + 9842t^8 + 105014t^7 + 401785t^6 + 621193t^5 + 401785t^4 + 105014t^3 + 9842t^2 + 253t + 1,
    \end{aligned}
  \end{equation*}
  \begin{equation*}
    \begin{aligned}
      &\HS_{\mathcal{O}_{\kronecker{5}{(2,3)}}(1)}(t)(1-t)^{19} \\
      &=t^{14} + 481t^{13} + 42591t^{12} + 1156536t^{11} + 12656731t^{10} + 64666759t^9 + 167366129t^8 + 228800034t^7 \\
      &\qquad+ 167366129t^6 + 64666759t^5 + 12656731t^4 + 1156536t^3 + 42591t^2 + 481t + 1.
    \end{aligned}
  \end{equation*}
\end{table}

\begin{remark}
  \label{remark:vector-fields-and-rigidity}
  Using work-in-progress \cite{vector-fields,rigidity} it is moreover possible to show that
  \begin{equation}
    \begin{aligned}
      \hh^0(\kronecker{m}{(d,e)},\tangent_{\kronecker{m}{(d,e)}})&=m^2-1 \\
      \hh^{\geq 1}(\kronecker{m}{(d,e)},\tangent_{\kronecker{m}{(d,e)}})&=0
    \end{aligned}
  \end{equation}
  for~$(d,e)$ coprime,
  thus describing more invariants for Kronecker moduli.
  In particular,
  we know the size of the automorphism group,
  and that Kronecker moduli are rigid.
\end{remark}

\subsection{Identifying the Kronecker moduli space for \texorpdfstring{$m=3$}{m=3} and \texorpdfstring{$\mathbf{d}=(2,3)$}{d=(2,3)}}
The first Kronecker moduli space which is not a Grassmannian
is~$\kronecker{3}{(2,3)}$,
a~6\dash dimensional Fano variety of Picard rank~1 and index~3.
An important ingredient in the ongoing (re)classification of Fano varieties \cite{MR4482268},
and an important tool to determine their invariants,
is describing them using
zero loci of sections of an equivariant vector bundle
on a Grassmannian (or more generally a partial flag variety).
It is an interesting question to understand to which extent this can be done
for quiver moduli.
We will consider this for~$\kronecker{3}{(2,3)}$.

In \cite{MR1326986} K\"uchle classified Fano 4-folds of index~1
realised as zero loci of homogeneous vector bundles on Grassmannians.
The variety~(b11) in op.~cit.~is a codimension-2 linear section of the variety we are actually interested in.
\begin{construction}
  \label{construction:zero-locus}
  Denote by~$X$ the zero locus of a general section of~$\mathcal{Q}^\vee(1)$ on~$\Gr(2,8)$.
\end{construction}
Here~$\mathcal{Q}$ is the universal quotient bundle of rank~6.
Using this description,
which allows Borel--Weil--Bott and Chow ring calculations,
the invariants listed in \cref{table:kronecker-census}
can computed as in the proof of \cite[Theorem~4.8]{MR1326986},
and they are seen to agree with those of~$\kronecker{3}{(2,3)}$.
And indeed, they are isomorphic.
\begin{proposition}
  \label{proposition:identification}
  The variety~$X$ from \cref{construction:zero-locus}
  is isomorphic to~$Y\colonequals\kronecker{3}{(2,3)}$.
\end{proposition}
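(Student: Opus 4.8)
The plan is to realise $X$ explicitly, to produce a morphism between $X$ and $Y$ through the moduli property of $Y$, and then to upgrade it to an isomorphism using that both have Picard rank $1$ and the same degree. The first step is to identify the section. Writing $\mathcal{S}$ for the tautological subbundle on $\Gr(2,8)$, so that $\mathcal{O}(1)=\det\mathcal{S}^\vee$ and $0\to\mathcal{Q}^\vee\to(k^8)^\vee\otimes\mathcal{O}\to\mathcal{S}^\vee\to0$, Borel--Weil--Bott gives $\HH^{>0}(\Gr(2,8),\mathcal{Q}^\vee(1))=0$ and identifies $\HH^0(\Gr(2,8),\mathcal{Q}^\vee(1))$ with $\bigwedge^3(k^8)^\vee$, the kernel of the contraction $(k^8)^\vee\otimes\bigwedge^2(k^8)^\vee\to\mathbb{S}^{(2,1)}(k^8)^\vee$. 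So a general section $s$ is a general alternating $3$-form $\omega$ on $k^8$ and $X=\{[\Lambda]\in\Gr(2,8):\iota_v\iota_{v'}\omega=0\text{ for all }v,v'\in\Lambda\}$. Now $(\GL_8,\bigwedge^3 k^8)$ is a prehomogeneous vector space with a dense orbit whose stabiliser is the $8$-dimensional group $\operatorname{Aut}(\mathfrak{sl}_3)$; choosing $\omega$ in that orbit identifies $k^8$ with $\mathfrak{sl}_3$ and $\omega$ with the Cartan form $(x,y,z)\mapsto\operatorname{tr}(x[y,z])$, so that $X$ becomes the variety of commutative $2$-planes in $\mathfrak{sl}_3$. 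By a Koszul/Bertini argument $X$ is smooth, connected, hence irreducible, of dimension $6$; by adjunction $K_X=\mathcal{O}_X(-3)$, and $\mathcal{O}_X(1)=\mathcal{O}_{\Gr(2,8)}(1)|_X$ is its primitive ample class.

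Next I would build the morphism. A $\theta_{\can}$-stable $\mathrm{K}_3$-representation of dimension vector $(2,3)$ is the data of a rank-$2$ subbundle $\mathcal{U}_1\hookrightarrow W^\vee\otimes\mathcal{U}_2$ (with $\dim W=3$, the inclusion obtained by dualising the universal map, injective by stability) together with the compatible structure maps. Using the geometry of $X$ one equips $X$ with such a universal family, with $\mathcal{U}_1=\mathcal{S}|_X$, and via \cref{remark:functor-of-points} obtains a morphism $f\colon X\to Y$. Conversely, restricting \eqref{equation:tautological-morphism-over-product} to the diagonal as in \eqref{equation:4-term-tangent-sequence} and using \cref{proposition:kodaira-spencer} one produces, from the universal representation on $Y$, a morphism $g\colon Y\to\Gr(2,8)$ with $g^*\mathcal{S}\cong\mathcal{U}_1$ up to a line-bundle twist (which is immaterial for the map to the Grassmannian), and one checks that $g$ lands in $X=Z(s)$ and that $f$ and $g$ are mutually inverse on closed points.

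Finally I would promote this to an isomorphism. Both $X$ and $Y$ are smooth projective of dimension $6$ with Picard rank $1$. Since $g^*\mathcal{O}_{\Gr(2,8)}(1)$ is an ample class on $Y$ and $\Pic(Y)=\mathbb{Z}$, the morphism $g$ contracts no curve, hence is finite; once it is shown to be injective and to separate tangent vectors (the latter being the role of \cref{proposition:kodaira-spencer}) it is a closed embedding, and its image is a smooth irreducible $6$-fold inside the irreducible $6$-fold $X$, so $g$ is an isomorphism $Y\xrightarrow{\sim}X$. Alternatively, $f$ is finite by the same curve-contraction argument, and the equality of the Hilbert series of $\mathcal{O}_X(1)$ and $\mathcal{O}_Y(1)$ provided by \cref{theorem:point-class,theorem:todd-class} and recorded in \cref{table:kronecker-census} forces $f^*\mathcal{O}_Y(1)=\mathcal{O}_X(1)$ and $\deg f=1$, so $f$ is finite and birational onto the normal variety $Y$, hence an isomorphism.

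The main obstacle is the middle step: matching $\theta_{\can}$-stability with the geometry of $X$. The naive recipe sending a commutative $2$-plane $\Lambda\subseteq\mathfrak{sl}_3\subseteq\operatorname{End}(k^3)$ to the representation $(\Lambda,\ k^3,\ \text{evaluation on a basis of }k^3)$ is $\theta_{\can}$-stable only on the dense open locus of planes containing no rank-$\le 1$ matrix (it fails on $\Lambda=\langle E_{12},E_{13}\rangle$, for instance), so the universal family over \emph{all} of $X$ must be produced by a subtler, presumably Fourier--Mukai type, modification of the tautological data; equivalently one sidesteps this by proving directly that $g\colon Y\to\Gr(2,8)$ is a closed embedding with image $Z(s)$ and concluding by the dimension count. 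Getting either of these to work cleanly — in particular verifying that $\mathcal{U}_1\subseteq k^8$ together with $\theta_{\can}$-stability really does determine the whole representation, and that the resulting map is everywhere defined and bijective — is where the real work lies.
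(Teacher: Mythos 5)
Your first step is sound: $\HH^0(\Gr(2,8),\mathcal{Q}^\vee(1))\cong\bigwedge^3(k^8)^\vee$, a general $3$-form lies in the open $\GL_8$-orbit with stabiliser $\operatorname{Aut}(\mathfrak{sl}_3)$, and $X$ is indeed the variety of abelian $2$-planes in $\mathfrak{sl}_3$. Your diagnosis of the difficulty is also correct: the tautological assignment $\Lambda\mapsto(\Lambda,k^3,\mathrm{ev})$ fails $\theta_\can$-stability on a proper closed locus (e.g.\ $\langle E_{12},E_{13}\rangle$). But that difficulty \emph{is} the content of the proposition, and your proposal does not resolve it. No family of stable representations over all of $X$ is produced; and in the other direction, no morphism $g\colon Y\to\Gr(2,8)$ is constructed. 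To get such a $g$ with $g^*\mathcal{S}\cong\mathcal{U}_1$ up to twist you would need an embedding of $\mathcal{U}_1$ (twisted) into a \emph{trivial} rank-$8$ bundle on $Y$; neither the restriction of \eqref{equation:tautological-morphism-over-product} to the diagonal, i.e.\ \eqref{equation:4-term-tangent-sequence}, nor \cref{proposition:kodaira-spencer} supplies such a datum --- they concern $\tangent_Y$, not a map to a Grassmannian. (Note also that your candidate family on $X$ has $\mathcal{U}_2$ trivial, which is incompatible with the universal $\mathcal{U}_2$ on $Y$ having Chern classes generating $\chow^\bullet_{\mathbb{Q}}(Y)$; so the correct family must be a genuine modification of the tautological one, as you suspect.) The closing step (finiteness from Picard rank $1$, then degree or Hilbert-series comparison via \cref{theorem:point-class,theorem:todd-class} and \cref{table:kronecker-census}) is fine once a morphism exists, but you explicitly defer the construction of that morphism as ``where the real work lies''. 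As written this is a plan with a hole at its centre, not a proof.

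For contrast, the paper avoids constructing any family on $X$ at all: by Drezet, $\operatorname{Hilb}^3\mathbb{P}^2$ is the blowup of $Y=\kronecker{3}{(2,3)}$ along a $\mathbb{P}^2$, so $Y$ is the target of the second extremal contraction of $\operatorname{Hilb}^3\mathbb{P}^2$ (the first being Hilbert--Chow), and known results on the birational models of $\operatorname{Hilb}^3\mathbb{P}^2$ identify the target of that contraction with the zero locus $X$ of \cref{construction:zero-locus}. If you want to salvage your direct approach, the missing ingredient is precisely an elementary modification of the tautological family along the unstable locus, which amounts to redoing that wall-crossing by hand.
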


\begin{proof}
  By \cite[Th\'eor\`eme~4]{MR0944602}
  we have that~$\operatorname{Hilb}^3\mathbb{P}^2$
  is the blowup of~$Y$ in a~$\mathbb{P}^2$,
  i.e.,
  it is the second elementary contraction
  (the first one being the Hilbert--Chow morphism to~$\operatorname{Sym}^3\mathbb{P}^2$).
  By \cite[Theorem~4.2]{MR2164624}
  we can thus identify~$X$ and~$Y$,
  because the variety~$Y_2$ in op.~cit.
  is the zero section~$X$ by using \cite[Theorem~3.8]{MR3732686}.
\end{proof}
This raises the question:
are there similar identifications for other Kronecker moduli?
The next cases to consider can be found in \cref{table:kronecker-census}.

The Kronecker moduli~$X$
is isomorphic to the height-zero moduli space~$Z=\mathrm{M}_{\mathbb{P}^2}(4,-1,3)$
of stable sheaves on~$\mathbb{P}^2$ with~$(r,\mathrm{c}_1,\mathrm{c}_2)=(4,-1,3)$
by \cite[Th\'eor\`eme~2]{MR0916199}.
This interpretation was essential for the proof of \cref{proposition:identification}.
The other Kronecker moduli in \cref{table:kronecker-census}
are not of this form:
for this to hold we need that~$m$ is a multiple of~3,
by \cite[Th\'eor\`eme~2]{MR0916199}.

\begin{remark}
  \label{remark:ellingsrud-stromme}
  The next case,
  that of~$\kronecker{4}{(2,3)}$
  was studied in \cite{MR1345086}
  in order to determine the number of twisted cubics on a general quintic 3-fold.
  This was an important litmus test for enumerative mirror symmetry of Calabi--Yau 3-folds.
  The intersection theory results in op.~cit.~are special cases of the general results in our paper,
  e.g.,
  the 4-term tangent bundle sequence in \eqref{equation:4-term-tangent-sequence}
  corresponds to Equation (4--4) in op.~cit.
\end{remark}

\newpage
\appendix

\section{Code for \texorpdfstring{\cref{proposition:kronecker-invariants}}{Proposition \ref{proposition:kronecker-invariants}}}
\label{section:code}
\inputminted[fontsize=\small]{sage}{kronecker.sage}

\printbibliography

\texttt{pieter.belmans@uni.lu} \\
University of Luxembourg, Department of Mathematics,
6, Avenue de la Fonte,
L-4364 Esch-sur-Alzette,
Luxembourg

\texttt{hans.franzen@math.upb.de} \\
Paderborn University, Institute of Mathematics,
Warburger Stra\ss e 100,
33098 Paderborn,
Germany

\end{document}